\documentclass[reqno]{amsart}
\usepackage{latexsym}
\usepackage{amssymb,amsthm,amsmath}
\usepackage[dvips]{graphicx}

\bibliographystyle{plain}

\usepackage{color}
\usepackage[colorlinks=true]{hyperref}
\hypersetup{urlcolor=blue, citecolor=blue}
\usepackage{tikz}
\usepackage{multicol}

\theoremstyle{plain}
\newtheorem{theorem}{Theorem}
\newtheorem{lemma}[theorem]{Lemma}
\newtheorem{corollary}[theorem]{Corollary}
\newtheorem{proposition}[theorem]{Proposition}

\theoremstyle{definition}
\newtheorem{definition}[theorem]{Definition}
\theoremstyle{remark}
\newtheorem{remark}[theorem]{Remark}

\def\d#1{{#1\kern-0.4em\char"16\kern-0.1em}}
\def\D#1{{\raise0.2ex\hbox{-}\kern-0.4em #1}}
%

\newcounter{zd}

\newcounter{zdr}[subsection]

\newcommand{\eps}{\varepsilon}

\def\ve{\varepsilon}

\def\F{{\cal F}}
\def\pa{\partial}
\def\ve{\varepsilon}

\def\cal{\mathcal}
\let\mib=\boldsymbol
\def\ae#1{\;(\hbox{\rm a.e. } #1)}

\def\R{{\bf R}}

\def\N{{\bf N}}

\def\ae#1{\;(\hbox{\rm ae } #1)}

\def\Bnor#1#2{\Bigl\| #1 \Bigr\|_{#2}}
\def\Cbc#1{{{\rm C}^{\infty}_{c}(#1)}}

\def\CC{{\rm C}}

\def\Cc#1{{{\rm C}_{{\rm c}}(#1)}}
\def\Cnl#1{{{\rm C}_0(#1)}}

\def\Cp#1{{{\rm C}(#1)}}
\def\pC#1#2{{{\rm C}^{#1}(#2)}}

\def\dscon{\relbar\joinrel\rightharpoonup}

\def\Dup#1#2{\langle#1,#2\rangle}
\def\Dupp#1#2{\Bigl\langle#1,#2\Bigr\rangle}
\def\eps{\varepsilon}

\def\Lb#1{{{\rm L}^\infty(#1)}}
\def\Lbc#1{{{\rm L}_{{\rm c}}^\infty(#1)}}
\def\Ld#1{{{\rm L}^{2}(#1)}}
\def\Ldc#1{{{\rm L}^{2}_{{\rm c}}(#1)}}
 
\def\Ldws#1{{{\rm L}^{2}_{{\rm w^\ast}}(#1)}}
\def\Lj#1{{{\rm L}^{1}(#1)}}
\def\Ljl#1{{{\rm L}^{1}_{{\rm loc}}(#1)}}
\def\Ll#1#2{{{\rm L}^{#1}_{{\rm loc}}(#2)}}
\def\LL#1{{{\rm L}^{#1}}}

\def\LLd{{{\rm L}^2}}
\def\LLb{{{\rm L}^{\infty}}}
\def\pL#1#2{{{\rm L}^{#1}(#2)}}
\def\pLc#1#2{{{\rm L}^{#1}_{{\rm c}}(#2)}}
\def\pLl#1#2{{{\rm L}^{#1}_{{\rm loc}}(#2)}}

\def\lS{{\cal S}}

\def\malpha{{\mib \alpha}}
\def\mbeta{{\mib \beta}}

\def\mkappa{{\mib \kappa}}

\def\msnop{{\bf p}}

\def\mx{{\bf x}}
\def\mxi{{\mib \xi}}
\def\my{{\bf y}}

\def\Nnul{{\bf N}_0}
\def\Nor#1{\| #1 \|}    
\def\nor#1#2{{\| #1 \|}_{#2}}   
\def\oi#1#2{\langle#1,#2\rangle}
\def\ozi#1#2{\langle#1,#2]}
\def\Pd{{\rm P}}
\def\ph{\varphi}

\def\Rd{{{\bf R}^{d}}}
\def\Rm{{{\bf R}^{m}}}

\def\Rpln{{{\bf R}^{+}_0}}

\def\str{\longrightarrow}
\def\supp{{\rm supp\,}}
\def\Svaki#1{\left(\forall\,#1\right)}
\def\ve{{\sf e}}

\def\W#1#2#3{{{\rm W}^{#1,#2}(#3)}}

\def\zi#1#2{[#1,#2]}
\def\zoi#1#2{[#1,#2\rangle}
\begin{document}
\title[Optimal velocity averaging]{On the velocity averaging for equations with optimal heterogeneous rough coefficients}

\author{ Martin Lazar}
\address{Martin Lazar \newline
University of Dubrovnik, Department of Electrical Engineering and Computing,   Dub\-rov\-nik, Croatia }

 \email{  martin.lazar@unidu.hr} \date{}
 \author{ Darko Mitrovi\'c}
\address{ Darko Mitrovi\'c  \newline
University of Montenegro, Faculty of Mathematics, Podgorica, Montenegro}
 \email{  matematika@t-com.me}

 \date{}

\begin{abstract}
Assume that $(u_n)$ is a sequence of solutions to heterogeneous
equations with rough coefficients and fractional derivatives, weakly
converging to zero in ${\rm L}^p(\R^{d+m})$, with $p>1$.
 We prove that  the
sequence of averaged quantities $(\int \rho(\my) u_n(\mx,\my) d\my)$
is strongly precompact in $\Ljl\Rd$ for any $\rho\in \Cc{\R^m}$,
provided that restrictive non-degeneracy conditions are satisfied.
These are fulfilled for elliptic, parabolic, fractional
convection-diffusion equations, as well as for parabolic equations
with a fractional time derivative. The main tool that we are using
is an adapted version  of H-distributions. As a consequence of the
introduced methods, we obtain an optimal velocity averaging result
in the $\LL p$, $p\geq 2$, framework under the standard
non-degeneracy conditions, as well as a connection between the
H-measures and the H-distributions.
\end{abstract}

\subjclass{34A08, 35A27, 42B37, 46B50}

\keywords{velocity averaging, H-distributions, heterogeneous framework, fractional derivatives}

\maketitle

\section{Introduction}


In the paper we extend results from  \cite{LM2}
concerning the velocity averaging for a general transport-type
equations to an $\LL p$ setting with an arbitrary $p>1$. A simplified version of some results presented here, has been outlined, mostly without proof, in \cite{LM_crass}.

Accordingly, we consider a sequence of functions $(u_n)$ weakly
converging to zero in ${\rm L}^p(\R^d_\mx \times \R^{m}_\my)$ for
some $p> 1$, and satisfying the following sequence of equations

\begin{equation}
\label{general}
\begin{split}
{\cal P}u_n(\mx,\my)&=\sum\limits_{k\in
I}\partial^{\malpha_k}_\mx \left(a_k(\mx,\my)
u_n(\mx,\my)\right)=\pa^\mkappa_\my G_n(\mx,\my),
\end{split}
\end{equation} where $I$ is a finite set of indices,  $\partial_\mx^{\malpha_k}=\partial_{x_1}^{\alpha_{k1}}\dots \partial_{x_d}^{\alpha_{kd}}$
for a multi-index $\malpha_k=(\alpha_{k1},\dots,\alpha_{kd})\in
\left(\Rpln\right)^d$, and similarly for $\mkappa=(\kappa_1,\dots,\kappa_m)\in \Nnul^m$. The fractional derivative $\pa_{x_k}^{\alpha_k}u$ is
defined by
$$
\pa_{x_k}^{\alpha_k}u=\bar{\F}((2 \pi i \xi_k)^{\alpha_k} \hat{u}),
$$ where $\hat{u}(\mxi)=\F(u)(\mxi)=\int_{\R^d}e^{-2\pi i \mx\cdot
\mxi}u(\mx)d\mx$ is the Fourier transform while $\bar{\F}$ (or
$^\vee$) is the inverse Fourier transform. Remark that in our
setting, the fractional derivative is actually the Fourier
multiplier operator with the symbol $(2 \pi i \xi_k)^{\alpha_k}$
(see Definition \ref{multiplier}).

Denote by $A$ the principal symbol of the (pseudo-)differen\-tial
operator ${\cal P}$, which is of the form
\begin{equation}
\label{glavni}
 A(\mx,\my, \mxi)=\sum\limits_{k\in I'} a_k(\mx,\my) (2\pi
i\mxi)^{\malpha_k}.
\end{equation}
The sum given above is taken
over all terms from \eqref{general} whose order of derivative
$\malpha_k$ is not dominated by any other multi-index from $I$.

For the principal symbol we  assume that there exists a multi-index $\mbeta=(\beta_1, \cdots,
\\ \beta_d)\in \R^d_+$ such that for any positive $\lambda\in \R$  the
following generalised homogeneity assumption holds
\begin{equation}
\label{gen_homog}
A(\mx,\my,\lambda^{1/\beta_1}\xi_1,\dots,\lambda^{1/\beta_d}\xi_d)=
\lambda A(\mx,\my, \mxi),
\end{equation}
 implying that
 \begin{equation}
 \label{sum1}
 \sum_j {(\alpha_{kj} / \beta_j)}=1 \ \ \text{for every $k\in I'$}.
 \end{equation} In addition we assume that the order of derivatives $\alpha_{kj}$
entering the principal symbol  are either integers, or larger or
equal to the space dimension $d$.

As for the coefficients from \eqref{general}, we assume that

\begin{itemize}

\item [\bf a)]
For some $\bar{p}\in \oi 1p$, it holds
$$
a_k\in {\rm L}^{{p}'}(\R^{m};{\rm L}^{\bar{p}'}(\R^d)), \ \ k=1,\dots,d, \ \
\frac{1}{p}+\frac{1}{\bar{p}'}+\frac{1}{q}=\frac{1}{p}+\frac{1}{p'}
=1,
$$
where $q={p \bar p}/{(p- \bar p)}$, while $p'$ stands for a
dual index of $p$.

\item [\bf b)] The sequence $(G_n)$ is strongly precompact in the
anisotropic space \\
${\rm L}^{1}(\R^m;{\rm W}^{(-\beta_1,\dots,-\beta_d),q'}(\R^d))$.

\end{itemize}
We see that the coefficients $a_k$ are chosen in such a way that
the sequences $(a_k u_n)$ are bounded in ${\rm L}^{1}(\R^{m};{\rm L}^{1+\eps}(\R^d))$ for some $\eps>0$. We are not able to prove
that the velocity averaging result holds if $(a_k u_n)$ are merely
bounded in ${\rm L}^{1}(\R^{d+m})$, but the results from \cite{32}
hint that it is not possible to propose better assumptions than those given in ${\bf
a)}$ (unless  having additional requirements on $(u_n)$ as in
\cite{18}).

Let us now introduce a definition of the weak solution to
\eqref{general}. Assume for the moment that the subindex $n$ is
removed from \eqref{general}.

\begin{definition}
\label{weaksol} We say that a function $u\in \pL p{\R^{d+m}}$ is a
weak solution to \eqref{general} if for every $g\in
\Cbc{\R^m;\W\mbeta q\Rd}$  it holds
\begin{equation}
\begin{split}
\label{defws} &\int\limits_{\R^{m+d}}\!\sum\limits_{k\in I}
a_k(\mx,\my)
u(\mx,\my)\overline{(-\pa_{\mx})^{\malpha_k}g(\mx,\my)}d\mx d\my =
(-1)^{|\mkappa|}\!\!\int\limits_{\R^{m}}\!\Dupp{ G(\cdot,\my)}
{\overline{\pa^{\mkappa}_\my g(\cdot,\my)}} d\my\,,
\end{split}
\end{equation}
where in the last term duality on  $\W{\mbeta}{q}\Rd$ is considered.
\end{definition}

It has been noticed since long time ago that even in the case of
homogeneous coefficients \cite{Ago}  one cannot expect the very
sequence $(u_n)$, but only the associated sequence of its averages
with respect to the velocity variable to be strongly precompact in
${\rm L}^p_{{\rm loc}}(\R^{d})$. More precisely, it was proved in
\cite{LM2} for $p\geq 2$ (or in \cite{Ger} in the hyperbolic case)
that an averaged quantity
\begin{equation}
\label{averaged}
\big(\int_{\R^m}\rho(\my)u_n(\mx,\my)d\my\big), \
\ \rho\in \Cc{\R^m},
\end{equation}
where $(u_n)$ are solutions to \eqref{general},
will be strongly ${\rm L}^2_{{\rm loc}}$ precompact provided the following
non-degeneracy condition is fulfilled

\begin{equation}
\label{kingnl} \Svaki{(\mx, \mxi) \in D\times\Pd} \quad A(\mx,\my,\mxi)\not= 0
\quad\ae{  \my \in \R^{m}}\,,
\end{equation}
where $D\subseteq\Rd$ is a full measure set, while $\Pd$ stands for an appropriate $d-1$ dimensional compact manifold in $\Rd$.

A result of this type is usually called a velocity
averaging lemma.

Its importance is demonstrated in many works, but we shall mention
only very famous \cite{LPT} and \cite{8}. Concerning the averaging
lemma itself, there are also indeed interesting works \cite{13, 18,
32, 36}, but almost all of them were given for homogeneous equations
(i.e.~the ones where coefficients do not depend on $\mx\in \R^d$),
exclusively with integer-order derivatives. The reason for this one
can search in the fact that, in the homogeneous situation, one can
separate the solutions $u_n$ from the coefficients (e.g. by applying
the Fourier transform with respect to $\mx$), and this is basis of
most of the methods (see e.g. \cite{36} and references therein). We
remark that more detailed observations on this issue one can find in
the introduction of our recent work \cite{LM2}.

In order to attack the heterogeneous situation a different tool is
required, and it was provided independently by P.~Gerard \cite{Ger}
and L.~Tartar \cite{Tar} through the concept of microlocal defect
measures (in the terminology of the former), or H-measures (as named
by the latter).\footnote{In the sequel we shall use the terminology of H-measures.}  After their work, different variants of the concept
appeared, adapted to a problem under consideration \cite{AL, LM2,
MI, pan_arma}.

In \cite{Ger} one can find a velocity averaging lemma in a
heterogeneous setting proved by using H-measures. In \cite{LM2} we proved a general version of the velocity
averaging lemma. However, in both papers, the sequence $(u_n)$ was
bounded in ${\rm L}^p(\R^{d+m})$ for $p\geq 2$ since the
notion of H-measures  is defined in the $\LLd$ framework  (i.e.~they
describe a loss of compactness for such sequences). Such $\LL 2$
character of the H-measures provides their non-negativity, which
enables the authors  to conclude that given object is actually a
Radon measure defined on $\R^d \times \Pd$, for an appropriate $d-1$
dimensional manifold $\Pd$ (Gerard and Tartar worked with
$\Pd=S^{d-1}$, where $S^{d-1}$ is the unit sphere in $\R^d$). If the
$\Rd$ projection of the H-measure associated to the sequence $(u_n)$
equals zero, then the sequence $(u_n)$ is strongly precompact in
${\rm L}^2_{{\rm loc}}(\R^d)$. Remark that the
$\R^d$ projection is actually the standard defect measure \cite{Liocc}.

In order to overcome the mentioned $p\geq 2$  confinement we must
invent a more sophisticated tool. It will be based on a
generalisation of the H-distributions concept from \cite{AM}. The
H-distributions were introduced in order to describe a defect of
strong convergence for ${\rm L}^p(\R^d;\R^n)$ sequences (i.e.~for
$n$-dimensional ${\rm L}^p$ sequences). It is not difficult to
generalise such a concept when the sequences have countable
dimension, and even when functions assume values in a separable
Hilbert space $H$, i.e.~$u_n\in {\rm L}^p(\R^d;H)$ (Proposition
\ref{prop_reprAp}). For a further clarification, one can compare
works \cite{Ger} and \cite{Tar}, and also to consult
\cite[Proposition 12]{LM2}.

Thus in the case of $\LL p$ sequences $(u_n)$ for $p<2$, we have
merely a distribution (instead of a measure) describing eventual
loss of strong precompactness. Therefore, in order to use the
H-distributions on the velocity averaging problem for
\eqref{general}, we must increase assumptions on the principal
symbol, and we require the following restrictive non-degeneracy
condition
\begin{equation}
\label{rndc}
\ae {(\mx,\my)\in \R^{d+m}} \quad A(\mx,\my,\mxi)\not=0, \quad \forall {\mxi \in \Pd}.
\end{equation}
The  condition implies (actually it is equivalent to) the following strong convergence
\begin{equation}
\label{ggnc} \frac{|A|^{2}}{|A|^{2}+\delta} \str 1 \ \ \text{ in \ $
\Ll{p'}{\R^m;\Ll{\bar{p}'}{\R^d; \pC{d}\Pd}}$}\,,
\end{equation}
 as $\delta\to 0 $, which is  needed for the proof of the main theorem.

Indeed, if \eqref{rndc} holds, then for almost every $(\mx,\my)\in
\R^{d+m}$ we have that
$$
\Bnor{1-\frac{|A|^{2}}{|A|^{2}+\delta}}{\pC{d}\Pd} =\delta \Bnor{\frac{1}{|A|^{2}+\delta}}{\pC{d}\Pd},
$$
which goes to zero as $\min_\mxi |A(\mx,\my,\mxi)|>0$. Note that the fractions above are smooth enough due to the assumptions on order of derivatives entering the principal symbol $A$.

Physically relevant equations satisfying the restrictive
non-degeneracy conditions are elliptic and parabolic equations, but
also fractional convection-diffusion equations \cite{CJ, CT}, and
parabolic equations with a fractional time derivative \cite{BWM,
BWM1, dentz} which degenerate on a set of measure zero.

The paper is organised as follows.

In Section 2, we introduce auxiliary notions and notations.

In Section 3, we introduce a variant of the H-distributions required to prove the main result of the paper -- the velocity averaging lemma for
\eqref{general} under the assumption \eqref{ggnc}. Unlike the proof sketched in
\cite{LM_crass}, here we propose a different approach  which gives
rise to the connection between the H-measures and the
H-distributions (given in the Appendix), but also incorporates methods that were
previously applied to elliptic problems with singular data \cite{DHM}.

In \cite{LM2} we have proved that in the case $p\geq2$ the  velocity averaging result holds under the classical non-degeneracy
condition \eqref{kingnl} merely. Due to the H-measures techniques used there, the coefficients $a_k$ in
\eqref{general} are restricted to $\Ld{\R^{m};\pL r\Rd}$ where $2/p+1/r=1$. In
Section 4, we combine  methods developed in previous sections with the H-measures to
improve the velocity averaging result from \cite{LM2} in the sense
that the coefficients to \eqref{general} belong to ${\rm L}^{{p}'}(\R^{m};{\rm L}^{\bar{p}'}(\R^d))$
where $1/p+1/{\bar p}<1$.

\section{Notions and notations}
We start with the notion of the Fourier multiplier which forms the basis of
the current contribution.

\begin{definition}
\label{multiplier} A (Fourier) multiplier operator ${\cal A}_\psi:\Ld\Rd\to
\Ld\Rd$ associated to a bounded function $\psi$ (see e.g.
\cite{Gra}), is a mapping  defined by
$$
{\cal A}_\psi(u)=\bar{\F}(\psi \hat{u}),
$$where $\hat{u}$ is the Fourier transform while $\bar{\F}$ (or $^\vee$)
is the inverse Fourier transform.

If, for a given $p\in\zoi 1\infty$, the multiplier operator  ${\cal A}_\psi$
satisfies
$$
\nor{{\cal A}_\psi (u)}{\LL p} \leq C \nor{ u}{\LL p}, \qquad u\in
\lS,
$$
where $C$ is a positive constant, while $\lS$ stands for a Schwartz space, then its symbol $\psi$ is called
an $\LL p$ (Fourier) multiplier.
\end{definition}

We shall analyse multipliers  defined on the manifold
$\Pd$ determined by the order of the derivatives entering the principal symbol  \eqref{glavni}:
\begin{equation*}
\Pd=\{\mxi\in \R^d: \; \sum\limits_{i=1}^d |\xi_i|^{l \beta_i}=1 \},
\end{equation*} where $\mbeta$ is the  homogeneity index from
\eqref{gen_homog}, while $l$ is a minimal number such that either $l \beta_i >d$ or $l \beta_i$ is an even integer for each $i$. These assumptions ensure that the introduced manifold is of class $C^d$ which enables us to analyse associated multipliers, as well as to define appropriate variant of the
H-distributions on them (see Theorem \ref{prop_repr}).

In order to associate an $\LL p$  multiplier to a function defined
on $\Pd$ we extend it to $\R^d\backslash \{0\}$ by means of the
projection
\begin{equation}
\label{projct} \big(\pi_{\Pd}(\mxi)\big)_i=\xi_i\,
\Big(|\xi_1|^{l \beta_1}+\dots+|\xi_d|^{l \beta_d} \Big)^{- 1/l
\beta_i}
=\xi_i\,
|\mxi|_\mbeta^{- 1/\beta_i}, \ \ i=1,\dots,d, \ \ \mxi\in \R^d\backslash\{0\}\,,
\end{equation}
where here and in the sequel we use abbreviation $|\mxi|_\mbeta=\Big(\sum |\xi_i|^{l \beta_i}\Big)^{1/l}$.

There are many criteria on a symbol $\psi$ providing it to be an
$\LL p$ multiplier.  In the paper, we shall need the Marcinkiewicz
multiplier theorem \cite[Theorem 5.2.4.]{Gra}, more precisely its
corollary which we provide here:

\begin{corollary}
\label{m1} Suppose that  $\psi\in \pC{d}{\R^d\backslash
\cup_{j=1}^d\{\xi_j= 0\}}$ is a bounded function such that for some constant $C>0$
it holds
\begin{equation}
\label{c-mar} |\mxi^{\tilde\malpha} \partial^{\tilde\malpha} \psi(\mxi)|\leq C,\ \
 \mxi\in \R^d\backslash \cup_{j=1}^d\{\xi_j= 0\}
\end{equation}
for  every multi-index
$\tilde\malpha=(\tilde\alpha_1,\dots,\tilde\alpha_d) \in \Nnul^d$ such that
$|\tilde\malpha|=\tilde\alpha_1+\tilde\alpha_2+\dots+\tilde\alpha_d \leq d$. Then, the function
$\psi$ is an ${\rm L}^p$-multiplier for $p\in \oi 1\infty$, and the
operator norm of ${\cal A}_\psi$ depends only on $C, p$ and $d$.
\end{corollary}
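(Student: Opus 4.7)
The plan is to deduce Corollary~\ref{m1} as a direct consequence of the full multivariate Marcinkiewicz multiplier theorem (Theorem~5.2.4 of \cite{Gra}). In its natural formulation, that theorem demands that, for each non-empty subset $S=\{j_1,\dots,j_s\}\subseteq\{1,\dots,d\}$ and each dyadic rectangle $R=\prod_{i=1}^{s}[2^{l_i-1},2^{l_i})$ (with an appropriate choice of sign per coordinate), the integral
\[
\int_R \bigl|\partial_{\xi_{j_1}}\cdots\partial_{\xi_{j_s}}\psi(\mxi)\bigr|\,d\xi_{j_1}\cdots d\xi_{j_s}
\]
be bounded uniformly in $R$ and in the remaining coordinates $\xi_j$, $j\notin S$. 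My strategy is to check these dyadic integrability conditions by extracting them from the pointwise hypothesis \eqref{c-mar}.

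First, for a fixed subset $S$ I would specialise \eqref{c-mar} to the multi-index $\tilde\malpha\in\Nnul^d$ with $\tilde\alpha_{j_i}=1$ for $i\in\{1,\dots,s\}$ and zeros elsewhere. Because $|\tilde\malpha|=s\leq d$, this multi-index is admissible in \eqref{c-mar}, and the bound becomes
\[
\bigl|\partial_{\xi_{j_1}}\cdots\partial_{\xi_{j_s}}\psi(\mxi)\bigr|\leq C\prod_{i=1}^{s}|\xi_{j_i}|^{-1}.
\]
Next I would integrate this estimate over the dyadic rectangle $R$; the product structure decouples the integration and each one-dimensional factor contributes $\log 2$, so the desired integral is at most $C(\log 2)^{s}\leq C(\log 2)^{d}$, uniformly in $R$ and in the remaining variables. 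Applying Theorem~5.2.4 of \cite{Gra} then yields that $\psi$ is an $\LL p$-multiplier for every $p\in\oi 1\infty$ with operator norm controlled by a constant depending only on $C$, $p$ and $d$.

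No genuine analytic obstacle arises, since every ingredient reduces to elementary one-dimensional integration; the only step requiring care is the combinatorial bookkeeping, namely arranging the correct multi-index for each subset $S$ and observing that the upper bound $|\tilde\malpha|\leq d$ in the hypothesis is precisely what accommodates the extreme case $S=\{1,\dots,d\}$. The remaining multi-indices $\tilde\malpha$ with $|\tilde\malpha|\leq d$ not of the form above are actually not needed; they are included in \eqref{c-mar} only to make the statement symmetric and simple to invoke.
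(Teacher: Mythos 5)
Your derivation is correct and follows essentially the same route as the paper, which states this result as a direct corollary of the Marcinkiewicz multiplier theorem \cite[Theorem 5.2.4]{Gra} without supplying a proof (it is in substance Corollary 5.2.5 of that book). Your verification of the dyadic-rectangle integral conditions from the pointwise bounds \eqref{c-mar}, using only the multi-indices with entries in $\{0,1\}$ and noting that the $\tilde\malpha=0$ case supplies the required $\LLb$ bound by the same constant $C$, is precisely the intended argument.
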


\begin{remark}
\label{bound}
Using this corollary, we have proved that for a bounded function $\psi$ defined on the manifold $\Pd$ and smooth outside coordinate hyperplanes, its extension $\psi_\Pd = \psi\circ\pi_{\Pd}$ is an $\LL p$ multiplier (see \cite[Lemma 5]{LM2}).
If in addition we assume that $\psi$ is smooth on the whole manifold, i.e. $\psi \in
\pC d\Pd$, then the corresponding operator satisfies
\begin{align}
\label{marz_bnd} \|{\cal A}_{\psi_\Pd} \|_{\LL p\to \LL p} \leq C
\|\psi\|_{\pC d\Pd},
\end{align}
with a constant $C$ depending only on $p\in \oi 1\infty$ and $d$.
\end{remark}
Here, we shall need a similar statement.

\begin{lemma}
\label{l1IIv}
Let  $\mbeta\in \R^d_+$  and let $\theta:\R^d\to \R$ be a smooth compactly
supported function equal to one on the unit ball centered at origin.

Then  for any $\gamma>0$ the multiplier operator ${\cal T}^{\gamma}$ with
the symbol
$$
T^\gamma(\mxi)(1-\theta(\mxi))=\frac{1}{|\mxi|_\mbeta^{\gamma}}(1-\theta(\mxi))
$$
is a continuous $\pL p\Rd\to {\rm W}^{\gamma \mbeta, p}(\R^d)$ operator for any $p\in \oi 1\infty$.
Specially, due to the Rellich theorem it is a compact $\pL p\Rd\to \pLl p\Rd$ operator.
\end{lemma}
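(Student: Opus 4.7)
The plan is to reduce the continuity of ${\cal T}^{\gamma}\colon \pL p\Rd\to \W{\gamma\mbeta}{p}\Rd$ to two applications of the Marcinkiewicz multiplier theorem (Corollary~\ref{m1}); the compactness as a $\pL p\Rd\to \pLl p\Rd$ operator is then immediate from the Rellich--Kondrachov embedding in the anisotropic setting. By the very definition of the anisotropic Sobolev space, it suffices to establish $\pL p\Rd$-continuity of the Fourier multipliers with symbols
\[
T^\gamma(\mxi)(1-\theta(\mxi)) \qquad\text{and}\qquad \psi_i(\mxi) := (2\pi i\,\xi_i)^{\gamma\beta_i}\,T^\gamma(\mxi)(1-\theta(\mxi)),\ \ i=1,\dots,d.
\]

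The key ingredient is the anisotropic homogeneity
\[
|(\lambda^{1/\beta_1}\xi_1,\dots,\lambda^{1/\beta_d}\xi_d)|_\mbeta = \lambda\,|\mxi|_\mbeta, \quad \lambda>0,
\]
which makes $|\mxi|_\mbeta^{-\gamma}$ homogeneous of anisotropic degree $-\gamma$, and $\xi_i^{\gamma\beta_i}|\mxi|_\mbeta^{-\gamma}$ of degree $0$. Coupled with the elementary bound $|\xi_i|^{l\beta_i}\le |\mxi|_\mbeta^l$, this gives the uniform boundedness of both $\psi_i$ and $T^\gamma(1-\theta)$, the cutoff $1-\theta$ removing the singularity of $|\mxi|_\mbeta^{-\gamma}$ at the origin. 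Smoothness off the coordinate hyperplanes -- precisely $\pC d{\,\cdot\,}$ -- follows from the hypothesis that for each $i$ the exponent $l\beta_i$ is either an even integer or exceeds $d$, which renders $\sum_j|\xi_j|^{l\beta_j}$ of class $\pC d\Rd$.

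To verify the Marcinkiewicz hypothesis \eqref{c-mar}, I would first compute
\[
\partial_{\xi_j}|\mxi|_\mbeta^{-\gamma} = -\gamma\beta_j\,\sgn(\xi_j)\,|\xi_j|^{l\beta_j-1}\,|\mxi|_\mbeta^{-\gamma-l},
\]
so that $\bigl|\xi_j\,\partial_{\xi_j}|\mxi|_\mbeta^{-\gamma}\bigr|\le \gamma\beta_j\,|\mxi|_\mbeta^{-\gamma}$. Iterating via the Leibniz rule, one obtains $\bigl|\mxi^{\tilde\malpha}\partial^{\tilde\malpha}|\mxi|_\mbeta^{-\gamma}\bigr|\le C|\mxi|_\mbeta^{-\gamma}$ for every $|\tilde\malpha|\le d$ off the coordinate hyperplanes. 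Multiplying by $\xi_i^{\gamma\beta_i}$ and recalling that each derivative landing on the compactly supported smooth factor $1-\theta$ produces a uniformly bounded term, one gets \eqref{c-mar} for both symbols with a constant depending only on $\gamma$, $\mbeta$, $d$ and $\theta$. Corollary~\ref{m1} then delivers $\pL p\Rd$-boundedness of the two multiplier operators, hence of ${\cal T}^\gamma\colon\pL p\Rd\to\W{\gamma\mbeta}{p}\Rd$.

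The main -- ultimately routine -- obstacle is the bookkeeping of anisotropic degrees through repeated differentiation: each $\partial_{\xi_j}$ lowers the anisotropic degree by $\beta_j$ while multiplication by $\xi_j$ raises it by the same amount, so the Marcinkiewicz combination $\xi_j^{\tilde\alpha_j}\partial_{\xi_j}^{\tilde\alpha_j}$ preserves the anisotropic degree; since $|\mxi|_\mbeta$ is bounded below on $\supp(1-\theta)$, degree zero together with smoothness is equivalent to boundedness there, which is exactly what Corollary~\ref{m1} requires.
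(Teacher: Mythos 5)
Your proposal is correct and follows essentially the same route as the paper: reduce everything to the Marcinkiewicz criterion (Corollary~\ref{m1}) for the symbol $T^\gamma(1-\theta)$ and for its products with $(2\pi i\xi_j)^{\gamma\beta_j}$, using the anisotropic homogeneity of $|\mxi|_\mbeta$ and the cutoff to control the origin. The only cosmetic difference is that the paper organizes the derivative bounds as an induction giving the representation $\partial^{\malpha}T^\gamma=\mxi^{-\malpha}T^\gamma P_{\malpha}$ and, for the second family of symbols, recognizes them as $(1-\theta)\bigl(\pi_{\Pd}(\mxi)\bigr)_j^{\gamma\beta_j}$ so as to invoke the already-established multiplier property of compositions with $\pi_{\Pd}$, whereas you verify the same estimates directly via the Leibniz rule.
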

\begin{proof}
We shall first prove that the operator ${\cal T}^\gamma$ is a continuous operator on $\pL p\Rd$. To this effect, remark that
it is enough to prove that $T^\gamma$ satisfies condition
of Theorem \ref{m1} away from the origin. Around the origin, the
operator ${\cal T}^\gamma$ is controlled by the term $(1-\theta)$ (which is
equal to zero on $B(0,1)$ and obviously satisfies conditions of
Theorem \ref{m1}).
We use the induction argument with respect to the order of derivative in \eqref{c-mar}.

\begin{itemize}

\item $n=1$

In this case, we compute
\begin{equation*}
\pa_{k}T^\gamma(\mxi)=
C_{k}\frac{1}{\xi_k} T^\gamma(\mxi) \big(\pi_{\Pd}(\mxi)\big)_k^{l \beta_k}
\end{equation*}
for some constant $C_{k}$. From here, it obviously
follows $|\xi_k \pa_{k} T^\gamma(\mxi)| \leq C$ for
$\mxi\in \R^d$ away from the origin.

\item $n=m$

Our inductive hypothesis is that a $\malpha$-order derivatives of $T^\gamma(\mxi)$
can be represented in the following way
\begin{equation}
\label{ih}
\partial^\malpha T^\gamma(\mxi)
=\frac{1}{\mxi^\malpha} T^\gamma(\mxi) P_\malpha(\mxi),
\end{equation}
where $P_\malpha$ is a bounded function satisfying \eqref{c-mar} for $|\tilde \malpha| \leq d - |\malpha|$.

\item $n=m+1$

To prove that \eqref{ih} holds for $|\malpha|=m+1$ it is enough to
notice that $\malpha=\ve_k+\malpha'$, where $|\malpha'|=m$, and that according to the induction hypothesis we have
\begin{align*}
&\partial^\malpha T^\gamma =\partial_k \partial^{\malpha'}
T^\gamma
=\partial_k\left(\frac{1}{\mxi^{\malpha'}} T^\gamma(\mxi) P_{\malpha'}(\mxi)\right)
=\frac{1}{\mxi^\malpha} T^\gamma(\mxi) P_\malpha(\mxi),
\end{align*}
where
$$
P_\malpha(\mxi)= (P_{\ve_k} P_{\malpha'}+ \xi_k \partial_k P_{\malpha'}
-\alpha_k P_{\malpha'}) (\mxi),
$$
thus satisfying conditions \eqref{c-mar} as well.

\end{itemize}

>From here, \eqref{c-mar} immediately follows for
$T^\gamma$ away from the origin, thus proving that the operator ${\cal T}^\gamma$ is a continuous operator on $\pL p\Rd$.

It remains to prove that for any $\beta_j$ from the $d$-tuple
$\mbeta=(\beta_1,\ldots,\beta_d)$, the multiplier operator
$\pa^{\gamma \beta_j}_{x_j} {\cal T}^\gamma$ is a continuous ${\rm L}^p(\R^d)\to
{\rm L}^p(\R^d)$ operator. To accomplish this, notice that its symbol is
\begin{equation}
\label{1_2014}
(1-\theta(\mxi)) \frac{(2\pi i \xi_j)^{\gamma
\beta_j}}{|\mxi|^\gamma_{\mbeta}}
= (1-\theta(\mxi)) \big(\pi_{\Pd}(\mxi)\big)_j^{\gamma \beta_j}.
\end{equation}
Thus, away from the origin, it is a composition of a  function which is smooth outside coordinate hyperplanes and the projection $\pi_{\Pd}$, and by Remark \ref{bound}  satisfies conditions of Theorem \ref{m1}.
\end{proof}

In the paper we shall need the following generalisation of Tartar's commutation lemma \cite[Lemma 28.2]{tar_book} to $\LL p, p\not=2$ sequences.

\begin{lemma}
\label{CommLemma} Let $B$ be the operator of multiplication by a
continuous function $b\in \Cnl\Rd$. Let $(v_n)$ be a bounded
sequence in $\Ld\Rd \cap \pL p\Rd$, $p\in \zi 1\infty$ such that
$v_n \rightharpoonup 0$ in the sense of distributions, and let
$\psi\in \pC d\Pd$. Then for the commutator  $C={\cal A}_{\psi_\Pd}
B-B {\cal A}_{\psi_\Pd}$ the sequence $(C v_n)$ converges strongly
to zero in $\pL q\Rd$ for any $q\in [2,p]\backslash \{\infty\}$ if $p\geq 2$, and any $q\in [p,2]\backslash \{1\}$ if $p< 2$,.

\end{lemma}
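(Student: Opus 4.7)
The plan is to reduce to a smooth, compactly supported $b$, recover the $\Ld \Rd$ conclusion from Tartar's classical commutation lemma, and then push to $\pL q \Rd$ by log-convexity interpolation --- with the endpoint $q=p$ requiring a separate compactness argument.

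\textbf{Reduction.} By Remark \ref{bound}, $\mathcal{A}_{\psi_\Pd}$ is bounded on $\pL r \Rd$ for every $r\in(1,\infty)$; hence $C$ depends continuously on $b$ (in the operator norm of any such $\LL r$) when $b$ carries the sup norm. Since $(v_n)$ is uniformly bounded in every intermediate $\pL q \Rd$ by log-convexity of $\LL r$ norms, an $\varepsilon$-approximation of $b\in\Cnl{\R^d}$ by functions in $\Cbc{\R^d}$ reduces the proof to the case $b\in\Cbc{\R^d}$. For such $b$, Tartar's classical commutation lemma \cite[Lemma 28.2]{tar_book} applies (the symbol $\psi_\Pd$ is a Mikhlin multiplier of more than sufficient smoothness by Remark \ref{bound}), yielding $C v_n \to 0$ strongly in $\Ld \Rd$.

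\textbf{Extension to $\pL q \Rd$.} Remark \ref{bound} also gives that $(C v_n)$ is uniformly bounded in $\pL p \Rd$. For any $q$ strictly between $2$ and $p$, the log-convexity inequality
$$
\|C v_n\|_{\pL q \Rd} \leq \|C v_n\|_{\Ld \Rd}^{\theta}\,\|C v_n\|_{\pL p \Rd}^{1-\theta}, \qquad \theta\in(0,1),
$$
combined with the $\Ld \Rd$ convergence above, gives $C v_n \to 0$ in $\pL q \Rd$. The endpoint $q=p$ (when $p$ is finite and $p\neq 2$) is the delicate point: boundedness in $\LL p$ together with strong $\LL 2$ convergence does not in general imply strong $\LL p$ convergence. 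Here I would invoke compactness of $[\mathcal{A}_{\psi_\Pd},B]$ on $\pL p \Rd$ for $b\in\Cbc{\R^d}$: since $\mathcal{A}_{\psi_\Pd}$ is a Calder\'on--Zygmund operator (Corollary \ref{m1}) and $b\in\Cbc{\R^d}\subset\mathrm{VMO}$, Uchiyama's compactness theorem applies. Combined with $v_n \rightharpoonup 0$ in $\pL p \Rd$ (a consequence of distributional weak convergence together with the $\LL p$ bound), compactness forces $C v_n \to 0$ strongly in $\pL p \Rd$.

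The main obstacle is precisely this endpoint $q=p$: strict interpolation between $\LL 2$ and $\LL p$ does not close at the exponent $p$ itself, so one must appeal to a genuine compactness property of $C$ acting on $\LL p$. The remainder of the argument is a routine combination of density, continuity in the symbol $b$, and log-convexity interpolation.
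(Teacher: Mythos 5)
Your core argument is the paper's own proof: compactness of $C$ on $\Ld\Rd$ via Tartar's first commutation lemma (the paper cites \cite[Theorem 6]{LM2} for the verification that $\psi_\Pd$ meets its hypotheses), followed by the log-convexity inequality $\|Cv_n\|_{q}\leq\|Cv_n\|_{2}^{\alpha}\|Cv_n\|_{r}^{1-\alpha}$ together with the $\LL r$-boundedness of $C$. Your preliminary reduction to $b\in\Cbc\Rd$ is harmless but not needed for that part, since Tartar's lemma already covers $b\in\Cnl\Rd$. Note also that $\psi_\Pd$ is a \emph{Marcinkiewicz} multiplier (Corollary \ref{m1}), not a Mikhlin one: the derivatives of the anisotropic projection $\pi_\Pd$ degenerate near the coordinate hyperplanes, so the isotropic bound $|\partial^{\tilde\malpha}\psi_\Pd(\mxi)|\lesssim|\mxi|^{-|\tilde\malpha|}$ is not available.

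Where you genuinely depart from the paper is at the endpoint $q=p$ (finite $p\neq2$), and your diagnosis there is correct: the interpolation inequality needs an exponent $r$ strictly beyond $q$ on the $p$-side, while $(Cv_n)$ is only known to be bounded in $\pL r\Rd$ for $r$ between $2$ and $p$, so the paper's displayed argument does not literally reach $q=p$ either. However, your repair via Uchiyama's theorem is not justified as stated: that theorem concerns Calder\'on--Zygmund operators with standard kernels, and Corollary \ref{m1} asserts only $\LL p$-boundedness under the Marcinkiewicz condition \eqref{c-mar} for an anisotropically homogeneous symbol of finite ($\CC^d$) smoothness --- no standard kernel estimates for ${\cal A}_{\psi_\Pd}$ are established or even plausible near the coordinate hyperplanes. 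A fix that stays within the tools at hand is interpolation of \emph{compactness} (Krasnoselskii): since $C$ is compact on $\Ld\Rd$ and bounded on $\pL r\Rd$ for every $r\in\oi1\infty$, it is compact on $\pL q\Rd$ for every $q\in\oi1\infty$ (choose $r$ so that $q$ lies strictly between $2$ and $r$); as $v_n\rightharpoonup0$ weakly in $\pL q\Rd$ for every $q$ in the stated range (boundedness plus distributional convergence), compactness yields $Cv_n\to0$ strongly in $\pL q\Rd$, endpoint included. With that substitution your argument is complete and in fact closes a small gap in the paper's own proof.
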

\begin{proof}
In \cite[Theorem 6]{LM2} we have proved that for an arbitrary
$\psi\in \pC d\Pd$ the extension $\psi_\Pd$ satisfies the conditions
of Tartar's commutation lemma, thus ensuring    that $C$ is a
compact operator on $\Ld\Rd$.

According to the
interpolation inequality for any $r$ between 2 and $p$, and $\alpha\in\oi01$ we have
\begin{equation}
\label{pro_1} \|Cv_n\|_{q}\leq
\|Cv_n\|^\alpha_{2}\|Cv_n\|^{1-\alpha}_{r},
\end{equation}
where $1/q=\alpha/2+(1-\alpha)/r$. As $C$ is
a compact operator on $\Ld\Rd$, while $C$ is bounded on
$\pL r\Rd$ for $r\in \oi 1\infty$,  we get the claim.
\end{proof}

Next, we shall need the following truncation operator
\begin{equation}
T_l(u)=\begin{cases} 0, & |u|>l\\
u, & u\in [-l,l]
\end{cases}, \ \ l\in \N.
\label{trunc}
\end{equation} The operator (more precisely its variant) was introduced in
\cite{DHM} where it was noticed that convergence of $(T_l (u_n))$
for every $l\in \N$ in $\Ljl\Rd$ implies the strong
convergence of $(u_n)$ in $\Lj\Rd$ (Lemma \ref{DHM} below). This property will be used in order to
prove the strong precompactness of the averaged family
\eqref{averaged}. Moreover, the truncation operator will enable us
to obtain a relation between the H-measures and the
H-distributions (see the Appendix).

The following statements ensure the above mentioned property of the truncation operator.

\begin{lemma}
\label{trunc-l}
 Let $(u_n)$ be a bounded sequence in $\pL p\Omega$ for
some $p> 1$,  where $\Omega$ is an open set in $\R^d$.
Then for   the sequence of truncated functions it holds
\begin{equation}
\label{uni_n}
\lim_l \sup_n \nor{T_l(u_{n}) -  u_{n}}{\pL 1\Omega} \str 0\,.
\end{equation}
\end{lemma}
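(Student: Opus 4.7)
The plan is to observe that the truncation defined in \eqref{trunc} leaves the function unchanged on the set $\{|u_n|\le l\}$ and sets it to zero elsewhere, so that
\[
|T_l(u_n)-u_n|=|u_n|\,\chi_{\{|u_n|>l\}}\quad\text{a.e. on }\Omega.
\]
Hence the problem reduces to a uniform estimate of $\int_{\{|u_n|>l\}}|u_n|$ in terms of the $\pL p\Omega$-norms, exploiting the fact that $p>1$ gives some extra integrability beyond $\LLj$.

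First I would apply H\"older's inequality with exponents $p$ and $p'$ to get
\[
\int_{\{|u_n|>l\}}|u_n|\,d\mx\le \|u_n\|_{\pL p\Omega}\,\big|\{|u_n|>l\}\big|^{1/p'}.
\]
Then I would estimate the measure of the level set using the Chebyshev/Markov inequality,
\[
\big|\{|u_n|>l\}\big|\le \frac{\|u_n\|_{\pL p\Omega}^p}{l^p},
\]
which combined with the previous inequality yields
\[
\|T_l(u_n)-u_n\|_{\pL 1\Omega}\le \frac{\|u_n\|_{\pL p\Omega}^{\,p}}{l^{p-1}}.
\]

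Since by assumption $\sup_n\|u_n\|_{\pL p\Omega}<\infty$ and $p-1>0$, taking the supremum over $n$ and then passing to the limit $l\to\infty$ gives \eqref{uni_n}. There is no real obstacle here; the only thing to be a little careful about is that the definition \eqref{trunc} truncates to zero rather than to $\pm l$, so one does not pick up an extra boundary contribution in the pointwise identity for $|T_l(u_n)-u_n|$. Note also that no assumption on $|\Omega|$ is needed, so the same estimate covers $\Omega=\R^d$ as required by the later applications.
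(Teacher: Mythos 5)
Your proposal is correct and follows essentially the same route as the paper: both arguments reduce the difference to $\int_{\{|u_n|>l\}}|u_n|$, bound it by H\"older's inequality with exponents $p,p'$, and control the measure of the level set via Chebyshev using the uniform $\pL p\Omega$ bound. Your version merely makes the resulting rate $l^{-(p-1)}$ explicit, which the paper leaves implicit.
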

\begin{proof}
Denote by
$$
\Omega_n^l=\{ \mx\in\Omega:\, u_{n}(\mx) > l \}.
$$
Since $(u_{n})$ is bounded in $\pL p\Omega$ we have
\begin{align*}
&\sup\limits_{k\in \N} \int_{\Omega }|u_{n}(\mx)|^p d\mx \geq
\sup\limits_{k\in \N} \int_{\Omega_n^l} l^p d\mx\,,
\end{align*}
implying that
\begin{equation}
\label{conv1}
\lim\limits_{l\to \infty} \sup\limits_{k\in \N} {\rm
meas}(\Omega_n^l) =0.
\end{equation}
Now, we use the H\"{o}lder inequality
$$
\int_\Omega |u_{n}-T_l(u_{n})|dx=\int_{\Omega_n^l} |u_{n}|dx \leq
{\rm meas}( \Omega_n^l)^{1/p'} \|u_{n}\|_{\pL p{\Omega}}
$$
which tends to zero uniformly with respect to $n$ according to \eqref{conv1}.
Thus, \eqref{uni_n} is proved.
\end{proof}

\begin{lemma}
\label{DHM} Let $(u_n)$ be a bounded sequence in $\pL p\Omega$ for
some $p> 1$,  where $\Omega$ is an open set in $\R^d$. Suppose
that for each $l\in\N$ the sequence of truncated functions
$(T_l(u_n))$ is precompact in ${\rm L}^1(\Omega)$. Then there
exists a subsequence $(u_{n_k})$ and function $u\in \pL p\Omega$ such that
$$
u_{n_k} \str u  \ \ \text{in $\pL 1\Omega$.}
$$
\end{lemma}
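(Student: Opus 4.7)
The plan is to combine the uniform truncation bound from Lemma \ref{trunc-l} with a standard diagonal extraction and completeness of $\pL 1\Omega$.

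First I would fix the ingredients. By Lemma \ref{trunc-l}, the boundedness of $(u_n)$ in $\pL p\Omega$ yields
$$
\eta(l) \od \sup_n \nor{T_l(u_n)-u_n}{\pL 1\Omega} \to 0 \quad \text{as }l\to\infty.
$$
By hypothesis, for each fixed $l\in\N$ the sequence $(T_l(u_n))_n$ is precompact in $\pL 1\Omega$. Applying this for $l=1,2,3,\dots$ and passing successively to subsequences, a standard Cantor diagonal procedure produces a single subsequence, which (relabelling) we still denote by $(u_n)$, such that $(T_l(u_n))_n$ converges in $\pL 1\Omega$ for every $l\in\N$ simultaneously.

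The core step is to show that this diagonal subsequence is Cauchy in $\pL 1\Omega$. For any $n,m,l$ the triangle inequality gives
$$
\nor{u_n-u_m}{\pL 1\Omega} \leq \nor{u_n-T_l(u_n)}{\pL 1\Omega} + \nor{T_l(u_n)-T_l(u_m)}{\pL 1\Omega} + \nor{T_l(u_m)-u_m}{\pL 1\Omega},
$$
hence
$$
\limsup_{n,m\to\infty} \nor{u_n-u_m}{\pL 1\Omega} \leq 2\eta(l),
$$
since for fixed $l$ the middle term tends to zero as $n,m\to\infty$. Letting $l\to\infty$ and invoking $\eta(l)\to 0$, we conclude that $(u_n)$ is Cauchy in $\pL 1\Omega$, hence converges to some $u\in\pL 1\Omega$ by completeness.

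It remains to show $u\in\pL p\Omega$. Since $(u_n)$ is bounded in the reflexive (for $p\in\oi 1\infty$) space $\pL p\Omega$ — or, for $p=\infty$, in a dual space — we may extract a further subsequence weakly (or weakly-$\ast$) convergent to some $\tilde u\in\pL p\Omega$. Weak $\pL p$ convergence implies convergence in the sense of distributions, and so does the $\pL 1$ convergence to $u$ established above; by uniqueness of distributional limits, $\tilde u=u$, so $u\in\pL p\Omega$. The main (mild) obstacle is keeping the diagonal argument clean and simultaneously controlling the tail uniformly in $n$; both are handled precisely by Lemma \ref{trunc-l}, which provides the required $n$-uniform smallness of $\nor{u_n-T_l(u_n)}{\pL 1\Omega}$.
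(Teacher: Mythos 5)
Your proof is correct and follows essentially the same route as the paper: both rest on the uniform tail estimate of Lemma \ref{trunc-l} combined with the precompactness of the truncations and a three-term triangle inequality, the only cosmetic difference being that you show the diagonal subsequence is Cauchy directly, whereas the paper first shows the truncation limits $u^l$ form a Cauchy sequence and then passes to $u_{n_k}\to u$. Your added weak-compactness argument that the limit actually lies in $\pL p\Omega$ is a welcome detail the paper leaves implicit.
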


\begin{proof}

Due to the strong precompactness assumptions on truncated sequences, there exists a subsequence  $(u_{n_k})$  such that for every $l\in \N$ the sequence $(T_l(u_{n_k}))$ is convergent in $\Lj \Omega$, with  a limit denoted by $u^l$.
We  prove that the obtained sequence $(u^l)$ strongly converges in
$\Lj \Omega$ as well.

To this end, note that
\begin{align}
\label{ul} \|u^{l_1}-u^{l_2}\|_{\Lj \Omega}
&\leq
\|u^{l_1}-T_{l_1}(u_{n_k})\|_{\Lj \Omega}+\|T_{l_1}(u_{n_k})-u_{n_k}\|_{\Lj \Omega}\nonumber\\
&+\|T_{l_2}(u_{n_k})-u_{n_k}\|_{\Lj \Omega}+\|T_{l_2}(u_{n_k})-u^{l_2}\|_{\Lj \Omega}\,,\nonumber
\end{align}
which together with  Lemma \ref{trunc-l} implies that $(u^l)$ is a Cauchy sequence. Thus, there exists $u\in \Lj \Omega$ such that
\begin{equation}
\label{conv2}
u^l \to u \ \ {\rm in} \ \ \Lj \Omega.
\end{equation}

Now it is not difficult to see that entire $(u_{n_k})$ converges toward $u$
in $\Lj \Omega$ as well. Namely, it holds

\begin{align*}
\label{dbk11} \|u_{n_k}-u\|_{\Lj \Omega} \leq
\|u_{n_k}-T_l(u_{n_k})\|_{\Lj \Omega}+\|T_l(u_{n_k})-u^l\|_{\Lj \Omega}+\|u^l-u\|_{\Lj \Omega},
\end{align*}
which by the definition of functions $u^l$, and convergences \eqref{uni_n} and \eqref{conv2} imply the statement.
\end{proof}

\section{H-distributions and velocity averaging}

We  start the section with description of the variant of
H-distributions that we use in the proof of the main theorem. It has
been recently introduced in \cite{LM_crass} in an isotropic case, and it is an extension
of the concept proposed in \cite{AM}.

\begin{theorem}
\label{prop_repr} Let $(u_n)$ be a bounded sequence in $\pL
p{\R^{d+m}}$, $p>1$, and let $(v_n)$ be a sequence converging weakly
to zero in $\Ld\Rd \cap {\rm L}^{q}(\R^d)$ for some (finite) $q \geq
p'$. Let $\bar{p}\in\zoi 1p$ be such that
$\frac{1}{p}+\frac{1}{\bar{p}'}+\frac{1}{q}=1$. Then, after passing
to a subsequence (not relabeled), there exists a continuous bilinear
functional $B$ on $\pL{p'}{\R^{m};\pL{\bar{p}'}\Rd} \otimes
\pC{d}\Pd$ (with $\Lb\Rd$ being replaced by $\Cnl\Rd$ if $q=p'$)
such that for every $\phi_1\in \pL {{p}'}{\R^{m}; \pL{\bar{p}'}\Rd}
$, $\phi_2 \in \Cnl\Rd$, and $\psi \in \pC{d}\Pd$, it holds
\begin{equation}
\label{rev1}
 B(\phi_1 \overline{\phi_2}, \overline\psi)=\lim\limits_{n\to
\infty}\int_{\R^{d+m}} \phi_1(\mx, \my)u_n(\mx,\my)
\overline{{\cal A}_{\psi_\Pd} \big(\phi_2 v_n\big)(\mx)} d\mx
d\my\,,
\end{equation}
where ${\cal A}_{\psi_\Pd}$ is the (Fourier) multiplier operator on
$\R^d$ associated to $\psi\circ \pi_\Pd$.

The functional $B$ we call the H-distribution corresponding to (sub)sequences (of) $(u_n)$ and $(v_n)$.
\end{theorem}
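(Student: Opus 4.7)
The plan is to construct $B$ as the subsequential limit of the auxiliary bilinear form
\begin{equation*}
\tilde L_n(\varphi,\psi) := \int_{\R^{d+m}} \varphi(\mx,\my)\, u_n(\mx,\my)\, \overline{{\cal A}_{\psi_\Pd}(v_n)(\mx)}\, d\mx\, d\my,
\end{equation*}
in which $\varphi$ acts directly on $u_n$ rather than $\phi_2$ being convolved with $v_n$; the representation \eqref{rev1} will then be recovered via Tartar's commutation lemma. The first step is a uniform bound on $\tilde L_n$. Applying H\"older in $(\mx,\my)$ with exponents $(p,p')$, then H\"older in $\mx$ with exponents $\bar p'$ and $q$ (whose reciprocals sum to $1/p'$), and finally invoking the Marcinkiewicz-type estimate of Remark \ref{bound} for ${\cal A}_{\psi_\Pd}$, one obtains
\begin{equation*}
|\tilde L_n(\varphi,\psi)| \leq C\,\|u_n\|_{\pL p{\R^{d+m}}}\,\|v_n\|_{\pL q\Rd}\,\|\varphi\|_{\pL{p'}{\R^m;\pL{\bar p'}\Rd}}\,\|\psi\|_{\pC d\Pd},
\end{equation*}
uniformly in $n$.

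Since both $\pL{p'}{\R^m;\pL{\bar p'}\Rd}$ (with $\Lb\Rd$ replaced by $\Cnl\Rd$ in the borderline case $\bar p'=\infty$, i.e.\ $q=p'$) and $\pC d\Pd$ are separable, I next pick countable dense subsets and apply a standard diagonal extraction to produce a subsequence (not relabelled) along which $\tilde L_n(\varphi,\psi)$ converges for every pair from these dense sets; the uniform bound extends the limit by continuity to a bilinear functional $B$ on the full product space. To match $B$ with the right-hand side of \eqref{rev1}, observe that for $\varphi=\phi_1\overline{\phi_2}$ with $\phi_2\in\Cnl\Rd$,
\begin{equation*}
\int \phi_1 u_n \overline{{\cal A}_{\psi_\Pd}(\phi_2 v_n)}\, d\mx\, d\my = \tilde L_n(\phi_1\overline{\phi_2},\psi) + \int \phi_1 u_n\, \overline{[{\cal A}_{\psi_\Pd},\phi_2]v_n}\, d\mx\, d\my,
\end{equation*}
and Lemma \ref{CommLemma} yields $[{\cal A}_{\psi_\Pd},\phi_2]v_n\to 0$ strongly in $\pL q\Rd$, so the commutator term vanishes in the limit by the same H\"older--Marcinkiewicz chain used for the uniform bound.

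The principal obstacle is this last commutation step, whose applicability forces $\phi_2$ to be continuous and vanishing at infinity; this is precisely why the statement substitutes $\Cnl\Rd$ for $\Lb\Rd$ in the borderline case $q=p'$, where $\phi_2$ itself has to live in the test space. Outside that case one first establishes \eqref{rev1} for $\phi_2\in\Cc\Rd$ and extends by continuity of $B$, exploiting density of such tensor products in $\pL{p'}{\R^m;\pL{\bar p'}\Rd}$.
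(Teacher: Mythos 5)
Your proposal is correct and follows essentially the same route as the paper: the authors likewise define the auxiliary forms $B_n(\phi,\psi)=\int \phi\, u_n\, \overline{{\cal A}_{\psi}(v_n)}\,d\mx\,d\my$, bound them uniformly by the same H\"older--Marcinkiewicz chain, obtain the limit functional by extraction on countable dense sets (quoted as \cite[Lemma 3.2]{AM}), and use the commutation lemma (Lemma \ref{CommLemma}) to identify the limit with the right-hand side of \eqref{rev1}. The only difference is the order of steps --- the paper invokes the commutation lemma first, to show the limit depends only on the product $\phi_1\overline{\phi_2}$ --- which is immaterial.
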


\begin{remark}
In the case $p=q= 2$,  the H-distribution defined above is
actually the (generalised) H-measure corresponding to sequences $(u_n)$ and
$(v_n)$ (see Theorem \ref{th10}).
\end{remark}

\begin{proof}
First, remark that according to the commutation lemma (Lemma
\ref{CommLemma}), it holds

\begin{align}
\label{rev2}
\lim\limits_{n\to \infty}&\int_{\R^{d+m}} \phi_1(\mx,
\my)u_n(\mx,\my) \overline{{\cal A}_{\psi_\Pd} \big(\phi_2
v_n\big)(\mx)} d\mx d\my\\
&=\lim\limits_{n\to \infty}\int_{\R^{d+m}} \phi_1(\mx,
\my)\overline{\phi_2(\mx)} u_n(\mx,\my) \overline{{\cal A}_{\psi_\Pd} (v_n)(\mx)}
d\mx d\my. \nonumber
\end{align}
Thus the limit in \eqref{rev1} depends only on the product $\phi_1 \overline{\phi_2}\in \pL {{p}'}{\R^{m}; \pL{\bar{p}'}\Rd}$.

Next,  consider the bilinear mapping $B_n$ defined for every $\phi
\in \pL{p'}{\R^{m};\pL{\bar{p}'}\Rd}$ and $\psi\in \pC{d}\Pd$ by
$$
B_n(\phi, \psi) = \int_{\R^{d+m}} \phi(\mx, \my) u_n(\mx,\my)
\overline{{\cal A}_{\psi}( v_n)(\mx)} d\mx d\my.
$$
According to the H\"{o}lder inequality and the Marcinkiewicz theorem,
it holds
\begin{equation*}
 |B_n(\phi, \psi)| \leq C
\|\psi\|_{\pC{d}\Pd} \| v_n \|_{\pL{q}{\R^d}}
\int_{\R^{m}}
\|\phi(\cdot,\my)\|_{\pL{\bar p'}{\R^d}} \| u_n(\cdot,\my)
\|_{\pL{ p}{\R^d}}  d\my,
\end{equation*} where $C$ is the constant from relation \eqref{marz_bnd} depending
on $d$ and $q$. By using the
H\"{o}lder inequality again (now applied in the variable $\my$), we get that

\begin{align}
\label{step2}
|B_n(\phi, \psi)| & \leq
 C \|\psi\|_{\pC{d}\Pd} \| v_n \|_{\pL{q}{\R^d}}
\|\phi\|_{\pL{p'}{\R^{m};\pL{\bar{p}'}{\R^d}}}
\|u_n\|_{\pL{ p}{\R^{d+m}}}  \nonumber\\
&\leq \bar{C} \|\psi\|_{\pC{d}\Pd}
\|\phi\|_{\pL{p'}{\R^{m};\pL{\bar{p}'}{\R^d}}} , \nonumber
\end{align}
where $\bar{C}$ depends on $C$, and bounds on $\|u_n\|_{\pL{ p}{\R^{d+m}}}$ and
$\|v_n\|_{\pL{q}{\R^d}}$.

Thus it follows that $(B_n)$ is an equibounded sequence of bilinear
functionals, and by \cite[Lemma 3.2]{AM} and \eqref{rev2}, there
exists a functional $B$ for which \eqref{rev1} holds.
\end{proof}

It is not difficult to see that the H-distributions given in the
previous theorem exhibit similar properties  as the H-measures, in
the sense that trivial H-distribution implies a velocity averaging
result. Indeed, for a fixed $l\in \N$ and $\chi\in \Lbc{\R^{d+m}}$
take $(v_n^l)=\big(T_l(\int_{\R^{d+m}}\overline{\chi(\mx,\tilde\my)
u_n(\mx,\tilde\my)}d\tilde\my)\big)$, where $(u_n)$ is a sequence from the last theorem. Suppose that the projection on $\Rd$ of
the H-distribution $(B_l)$ corresponding to the sequences $(u_n)$
and $(v_n^l)$ is equal to zero for every $l\in \N$, i.e.
$$
B_l(\phi, 1)=0, \ \
\phi \in \pL{p'}{\R^{m};\pL{\bar{p}'}{\R^d}}, \;  l \in \N.
$$
By choosing $\phi_1 \overline{\phi_2}=\chi$ in \eqref{rev1}, we
conclude that
\begin{equation*}
\lim\limits_{n\to \infty}\int_{\R^{d}} \left|T_l \int_{\R^m}\chi(\mx,
\my)u_n(\mx,\my)d\my\right|^2 d\mx =0.
\end{equation*}
>From here and Lemma \ref{DHM}, we get that $(\int_{\R^m}\chi(\mx,
\my)u_n(\mx,\my)d\my)$ converges to zero strongly in $\Ljl\Rd$. From
the interpolation inequalities, the sequence is also strongly
precompact in $\Ll{\bar{p}}{\R^d}$ for any $\bar{p}<p$.

More information concerning the connection between the H-measures
and the H-distributions one can find in the Appendix.

Now, we go back to the main subject of the paper -- the velocity
averaging result and tools required for its proof. Remark that
according to the Schwartz kernel theorem \cite{Sw}, one can extend
functional $B$ to a distribution on ${\cal D}'(\R^{d+m}\times \Pd)$.
An improved result can be obtained by means of the next theorem. It is a
generalisation of \cite[Theorem 2.1]{LM_crass} to Lebesgue spaces
with mixed norms, with the proof going along the same lines. The
considered anisotropic Lebesgue spaces $\pL {\msnop}\Rd$ are Banach
spaces with the norm given by
$$
\nor f\msnop = \Bigl(\int \cdots\Bigl(\int\Bigl(\int|f(x_1,x_2,
\dots, x_d)|^{p_1} d x_1\Bigr)^{p_2/p_1} d x_ 2\Bigr)^{p_3/p_2}
\dots d x_ d\Bigr)^{1/p_d}\,.
$$
\begin{theorem}
\label{thm1} Let $B$ be a  continuous bilinear functional  on $\pL {\msnop}\Rd\otimes E$,
where $E$ is  a separable Banach space, and $\msnop\in {\oi 1\infty}^d$. Then
$B$  can be extended as a continuous functional on $\pL
{\msnop}{\R^{d};E}$ if and only if there exists a (nonnegative) function
$b\in \pL{{\msnop'}}\Rd$ such that for every $\psi\in E$ and almost every
$\mx\in\Rd$, it holds
\begin{equation}
\label{cond} | \tilde B \psi(\mx)| \leq b(\mx) \|\psi\|_{E} \,,
\end{equation}
where $\tilde B $ is a bounded linear operator $E \to \pL{{\msnop'}}\Rd$
defined by $\Dup{\tilde B \psi}\phi =B(\phi, \psi)$, $\phi \in
\pL{{\msnop}}\Rd$.

\end{theorem}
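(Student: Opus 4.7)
The plan is to prove the two directions separately; the forward direction (existence of $b$ gives the extension) is a bookkeeping exercise, while the reverse direction (extracting $b$ from the extension) carries the real content and will require a mixed-norm duality argument combined with the separability of $E$.

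For sufficiency, given $b\in\pL{\msnop'}\Rd$ satisfying \eqref{cond}, I define the extension on $E$-valued simple functions $F=\sum_{i=1}^N\chi_{A_i}\psi_i$ (with disjoint measurable $A_i$) by bilinearity,
$$\bar B(F):=\sum_{i=1}^N B(\chi_{A_i},\psi_i)=\int_{\Rd}\sum_{i=1}^N \chi_{A_i}(\mx)\,(\tilde B\psi_i)(\mx)\,d\mx.$$
Applying \eqref{cond} to each of the finitely many $\psi_i$ appearing (their exceptional null sets unite to a null set) bounds the integrand a.e.~by $b(\mx)\|F(\mx)\|_E$, and the mixed-norm Hölder inequality gives the continuity estimate $|\bar B(F)|\leq \|b\|_{\pL{\msnop'}\Rd}\|F\|_{\pL{\msnop}{\Rd;E}}$; density of simple functions in $\pL{\msnop}{\Rd;E}$ then completes the extension.

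For necessity, I start from a continuous extension $\bar B$, so that the associated operator $\tilde B:E\to\pL{\msnop'}\Rd$ is bounded with operator norm at most $\|\bar B\|$. Fix a countable dense set $\{\psi_k\}\subset E\setminus\{0\}$ together with fixed representatives $f_k:=\tilde B\psi_k$, and set
$$b_n(\mx):=\max_{1\leq k\leq n}\frac{|f_k(\mx)|}{\|\psi_k\|_E},\qquad b(\mx):=\sup_n b_n(\mx),$$
together with a measurable selector $\sigma_n:\Rd\to\{1,\dots,n\}$ realising the maximum. The simple $E$-valued function $F_n(\mx):=\psi_{\sigma_n(\mx)}/\|\psi_{\sigma_n(\mx)}\|_E$ has unit $E$-norm a.e., so $\|\phi F_n\|_{\pL\msnop{\Rd;E}}=\|\phi\|_{\pL\msnop\Rd}$ for any scalar $\phi$. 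I then construct $\phi_n\in\pL\msnop\Rd$ of unit $\pL\msnop\Rd$-norm that saturates the mixed-norm Hölder pairing against $b_n$ with matching sign, so that by bilinearity $\bar B(\phi_n F_n)=\|b_n\|_{\pL{\msnop'}\Rd}$; continuity of $\bar B$ then gives $\|b_n\|_{\pL{\msnop'}\Rd}\leq\|\bar B\|$, and monotone convergence promotes this bound to $b\in\pL{\msnop'}\Rd$. The pointwise bound \eqref{cond} itself extends from $\{\psi_k\}$ to arbitrary $\psi\in E$ by choosing $\psi_{k_j}\to\psi$ in $E$ and passing to an a.e.~convergent subsequence of $f_{k_j}\to\tilde B\psi$ in $\pL{\msnop'}\Rd$.

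The main obstacle I expect is the construction of the dual-realising function $\phi_n$ when $\msnop$ is genuinely anisotropic: in the isotropic case one simply takes $\phi_n\propto\sgn(f_{\sigma_n})\,b_n^{p'-1}$ normalised in $\pL p\Rd$, while for general $\msnop\in(1,\infty)^d$ the normalisation has to be distributed coordinate by coordinate via iterated Hölder and Fubini so that the duality is saturated in every direction simultaneously. All other steps are essentially standard.
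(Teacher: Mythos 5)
Your argument is correct, and it follows essentially the same route as the paper's (the paper itself only sketches Theorem \ref{thm1} by deferring to \cite{LM_crass}, but the construction there, and in the closely analogous Corollary \ref{korolar}, is exactly yours): build $b$ as the monotone limit of maxima of $|\tilde B\psi_k|/\|\psi_k\|_E$ over a countable dense subset of $E$, bound $\|b_n\|_{\pL{\msnop'}{\R^d}}$ uniformly by $\|\bar B\|$ by pairing with the unit-norm simple function $\phi_n F_n$ built from the argmax sets, and pass to general $\psi$ by density. The only cosmetic difference is that the paper tests $b_n$ against arbitrary $\phi\in\Cc{\R^{d}}$ and invokes mixed-norm duality, whereas you exhibit the Benedek--Panzone extremizer explicitly; both are valid.
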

The proof in \cite{LM_crass}  is presented for real functionals, but the result holds for complex ones as well, with the proof   going along the same lines  (just by considering the real and imaginary part of $\tilde B$ separately).

We use the theorem in order to get the following result for the functional $B$ from Theorem \ref{prop_repr}.

\begin{corollary}
\label{korolar}

If $\bar p>1$  the bilinear functional $B$ defined in Theorem
\ref{prop_repr}  can be extended as a continuous functional on $\pL{p'}{\R^{m};\pL{\bar{p}'}{\Rd; \pC d\Pd}}$.
\end{corollary}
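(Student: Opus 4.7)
The plan is to apply Theorem \ref{thm1} with the separable Banach space $E=\pC d\Pd$ and the mixed-norm exponents $\msnop$ encoding $\pL\msnop{\R^{d+m}}=\pL{p'}{\R^m;\pL{\bar p'}\Rd}$. The hypothesis $\bar p>1$ is used here in an essential way, as it guarantees that every entry of $\msnop$ lies in $(1,\infty)$, making Theorem \ref{thm1} applicable on the product space $\R^d\times\R^m$; the extension space then coincides with the claimed $\pL{p'}{\R^m;\pL{\bar p'}{\Rd;\pC d\Pd}}$. Continuity of $B$ on the tensor product $\pL\msnop{\R^{d+m}}\otimes E$ is immediate from \eqref{step2}, so what actually remains is to verify the pointwise condition \eqref{cond}: one must exhibit a nonnegative $b\in\pL p{\R^m;\pL{\bar p}\Rd}$ with $|\tilde B\psi(\mx,\my)|\leq b(\mx,\my)\|\psi\|_E$ for every $\psi\in E$ and almost every $(\mx,\my)$, where $\tilde B\colon E\to\pL p{\R^m;\pL{\bar p}\Rd}$ is defined by $\Dup{\tilde B\psi}\phi=B(\phi,\psi)$.

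To construct $b$ I would exploit the explicit representation of $\tilde B\psi$ provided by \eqref{rev2}: after passing to a subsequence, $\tilde B\psi$ equals (up to conjugation) the weak limit of $u_n\,\overline{{\cal A}_{\psi_\Pd}(v_n)}$ in $\pL p{\R^m;\pL{\bar p}\Rd}$; the identity $1/p+1/q=1/\bar p$, a consequence of $1/p+1/\bar p'+1/q=1$, together with H\"older and the Marcinkiewicz estimate of Remark \ref{bound} supplies a uniform bound. Fixing a countable dense set $\{\psi_j\}$ in the unit ball of $E$ and applying a diagonal extraction, I would arrange that $|u_n|\rightharpoonup U\in\pL p{\R^{d+m}}$ weakly and that the sequences $|{\cal A}_{(\psi_j)_\Pd}(v_n)|$ admit a common $\pL q\Rd$ envelope $W$; the pointwise inequality $|u_n\,\overline{{\cal A}_{(\psi_j)_\Pd}(v_n)}|\leq |u_n|\cdot|{\cal A}_{(\psi_j)_\Pd}(v_n)|$ then transfers to the weak limits through the standard observation that weak limits of absolute values dominate the absolute value of the weak limit, yielding $|\tilde B\psi_j|\leq UW$ a.e.; density of $\{\psi_j\}$ combined with continuity of $\tilde B$ in $\psi$ extends the bound to every $\psi\in E$, and a further application of H\"older using $1/p+1/q=1/\bar p$ certifies that $b:=UW$ belongs to $\pL p{\R^m;\pL{\bar p}\Rd}$.

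The main obstacle I anticipate is the construction of the single dominating envelope $W\in\pL q\Rd$ uniformly over the unit ball of $E$. Remark \ref{bound} delivers uniform $\pL q\Rd$-norm control of ${\cal A}_{\psi_\Pd}(v_n)$ for $\|\psi\|_E\leq 1$, but does not by itself furnish a pointwise envelope; extracting $W$ likely requires either a maximal-function-type estimate for the Marcinkiewicz multipliers appearing here, or a carefully iterated countable exhaustion argument that exploits separability of $E$ to pass between a.e. and norm control. Once $W$ is secured the remaining verifications are routine, and Theorem \ref{thm1} then produces the claimed continuous extension of $B$ to $\pL{p'}{\R^m;\pL{\bar p'}{\Rd;\pC d\Pd}}$.
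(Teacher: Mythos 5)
Your overall framework is right: the corollary does follow by verifying condition \eqref{cond} of Theorem \ref{thm1} for a suitable $b\in\pL p{\R^{m};\pL{\bar p}\Rd}$, and $\bar p>1$ is indeed what keeps the exponents in the admissible range. The gap is in the construction of $b$. Your bound $|\tilde B\psi|\leq UW$ rests on the existence of a single function $W\in\pL q\Rd$ dominating $|{\cal A}_{\psi_\Pd}(v_n)|$ pointwise, uniformly in $n$ and over the unit ball of $\pC d\Pd$. No such envelope exists in general: weak convergence to zero together with a uniform $\pL q\Rd$ norm bound (which is all that Remark \ref{bound} provides) does not yield pointwise domination even for a single fixed $\psi$ --- for a sequence of translating bumps, $\sup_n|v_n|$ already fails to lie in $\LL q$. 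Neither of the repairs you sketch addresses this: a maximal-function estimate controls suprema over dilates of one kernel, not over an arbitrary weakly null sequence, and a countable exhaustion cannot manufacture a pointwise majorant out of norm bounds. Without $W$ the next step also collapses, since the weak limit of the product $|u_n|\cdot|{\cal A}_{\psi_\Pd}(v_n)|$ is not dominated by the product $UW$ of the separate weak limits; that failure of weak limits to commute with products is precisely the phenomenon H-measures and H-distributions are designed to quantify.

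The paper's proof circumvents the envelope entirely. It fixes a countable dense family $\{\psi_j\}$ of the unit sphere of $\pC d\Pd$ and sets $b_k^{\cal R}=\max_{|j|\le k}\Re(\tilde B\psi_j)$ (likewise for the imaginary part), written as $\sum_j\Re(\tilde B\psi_j)\chi^k_j$ with $\chi^k_j$ the characteristic functions of the argmax sets. Because the $\chi^k_j$ form a partition, the H\"older inequality applied to $\sum_j(\phi u_n\chi^k_j)\,{\cal A}_{\psi_j}v_n$ produces the factors $\bigl(\sum_j|u_n|^p\chi^k_j\bigr)^{1/p}\le|u_n|$ and $\bigl(\sum_j\chi^k_j|\phi\,{\cal A}_{\psi_j}v_n|^{p'}\bigr)^{1/p'}$; after smoothing the $\chi^k_j$, invoking the commutation lemma (Lemma \ref{CommLemma}) to move them inside the multiplier, and applying \eqref{marz_bnd}, the second factor telescopes to $C\|\phi v_n\|_{\LL{p'}}$ with a constant independent of $k$. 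Monotone convergence then yields a limit $b$ satisfying \eqref{cond} on the dense family, hence everywhere by continuity. This partition-of-the-argmax device is the ingredient your argument is missing; it is exactly what replaces the nonexistent pointwise envelope.
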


\begin{proof} We shall prove that the functional $B$ satisfies conditions of
Theorem \ref{thm1}.

 As the first step, choose a dense countable set $E$
of functions $\psi_j$ on the unit sphere in $\pC d\Pd$. In addition,
we assume that for each such function, $-\psi_j$ belongs to the same
set as well and is indexed by $-j, j\in \N$.

Let $\tilde B$  be an operator defined in Theorem \ref{thm1}, which in this setting is a bounded linear operator
$\pC d\Pd \to
\pL{p}{\R^{m};\pL{\bar{p}}{\Rd}}$.
For each  function $\tilde B\psi_j$ denote by
$D_j$ the corresponding set of Lebesgue points. The set $D_j$ is of
full measure, and thus the set $D=\cap_j D_j$  as
well.

For any $(\mx, \my)\in D$ and $k\in\N$ denote
\begin{align}
\label{bk1}
b_k^{\cal R}(\mx, \my):=&\max\limits_{|j| \leq k} \Re(\tilde
B\psi_j (\mx, \my) )
=\sum\limits_{|j|=1}^k  \Re(\tilde B \psi_j
(\mx, \my)) \chi_j^k (\mx, \my)\\
b_k^{\cal I}(\mx, \my):=&\max\limits_{|j| \leq k} \Im(\tilde
B\psi_j (\mx, \my) )
=\sum\limits_{|j|=1}^k  \Im(\tilde B \psi_j
(\mx, \my)) \tilde\chi_j^k (\mx, \my)\,,\nonumber
\end{align}
where $\chi^k_{j}, \tilde\chi^k_{j}$, $|j|=1,\dots,k$ are characteristic functions of the sets of all points  for which the above maxima are achieved
for $\psi_{j}$.

We shall prove that the  functions $b_k^{\cal R}, b_k^{\cal I}$ given by \eqref{bk1}
are uniformly bounded in $\pL{p}{\R^{m};\pL{\bar{p}}{\Rd}}$. Since
 $(b_k^{\cal R}), (b_k^{\cal I})$ are increasing sequence of positive functions,  denoting by $b^{{\cal R}},b^{{\cal I}}$ their (pointwise) limits, the function  $b=b^{{\cal R}}+b^{{\cal I}}$ will satisfy the conditions of Theorem \ref{thm1}.
Indeed, according to \eqref{bk1}, we see that \eqref{cond} will hold for every $\psi_j \in E$, and by continuity, the statement
can be generalised to an arbitrary $\psi\in \pC d\Pd$.

Thus it remains to prove the boundedness of the sequence $(b_k^{\cal R}), (b_k^{\cal I})$. We write down the proof just for the first one, as for the second one is performed in completely the same way.   To this effect, take an arbitrary $\phi\in  \Cc{\R^{d+m}}$, and
denote $K=\supp \phi$. Let $\chi^{k, \eps}_{j}\in
\Cc{\R^{d+m}}$ be smooth approximations of
characteristic functions from \eqref{bk1} on $K$ such that

$$
\|\chi^{k, \eps}_{j}-\chi^k_{j} \|_{\pL {r}{K}}\leq \frac{\eps}{k},
$$
where $r>1$ is chosen such that $q=r' p'$.
Denote by $C_u$ an $\LL p$ bound of $(u_n)$ and by $C_{v}$ an $\LL {q}$ bound of $(v_n)$ .

According to \eqref{bk1} and the definition of the operator $\tilde B$, we have
\begin{align*}
&\big|\langle b_k^{\cal R}, \phi \rangle\big| =\Big| \lim\limits_{n\to
\infty}\Re\Big(\int_{\R^{d+m}} \sum\limits_{|j|=1}^k (\phi u_n  \chi^k_j)
(\mx, \my) ( {\cal A}_{\psi_j} v_n)(\mx) d\mx d\my\Big) \Big|
\\&\leq \limsup\limits_{n\to \infty} \int_{\R^{d+m}}
\Big(\sum\limits_{|j|=1}^k |u_n|^p \chi^k_j  \, (\mx, \my)\Big)^{1/p} \Big(
\sum\limits_{|j|=1}^k \chi_j^k |\phi\, {\cal A}_{\psi_j}
v_n|^{p'}\, (\mx, \my)\Big)^{1/p'} d\mx d\my
\\&\leq \limsup\limits_{n\to \infty}\|\sum\limits_{|j|=1}^k |u_n|^p
\chi^k_j\|^{1/p}_{\pL 1{\R^{d+m}}} \|\sum\limits_{|j|=1}^k \chi^k_j
|\phi\, {\cal A}_{\psi_j}v_n|^{p'}\|^{1/p'}_{\pL 1{\R^{d+m}}}
\\&\leq\limsup\limits_{n\to \infty} \|u_n\|_{\pL p{\R^{d+m}}} \, \Big(\|
\sum\limits_{|j|=1}^k (\chi^k_j-\chi^{k, \eps}_j) |\phi\, {\cal A}_{\psi_j}v_n|^{p'}\|_{\pL {1}{\R^{d+m}}}
\nonumber\\&\qquad\qquad\qquad\qquad\qquad\qquad+
\|\sum\limits_{|j|=1}^k \chi^{k, \eps}_j \phi\, {\cal A}_{\psi_j}v_n\|^{p'}_{\pL {p'}{\R^{d+m}}} \Big)^{1/p'}\\
&\leq  C_u \limsup\limits_{n\to \infty}\Big(\sum\limits_{|j|=1}^k \|
\chi^k_j-\chi^{k, \eps}_j \|_{\pL {r}K}  \| {\cal A}_{\psi_j}(\phi
v_n
)\|^{p'}_{\pL {q}{\R^{d+m}}}\\&\qquad\qquad\qquad\qquad\qquad\qquad+\sum\limits_{|j|=1}^k\|{\cal A}_{\psi_j}(\chi^{k, \eps}_j \phi
v_n)\|^{p'}_{\pL {p'}{\R^{d+m}}} \Big)^{1/p'},
\end{align*}
where in the last step we have used the  commutation lemma (Lemma
\ref{CommLemma}) and that $r' p'=q$. By means of the Marcinkiewicz theorem and
properties of the functions $\chi^{k, \eps}_j$ it follows
\begin{align*}
&\big|\langle b_k^{\cal R}, \phi \rangle\big| \leq C_u C_{p'}\limsup
\limits_{n\to \infty}\Big(\eps \, C_\phi \|v_n
\|^{p'}_{\pL {q}\Rd}\!+\!\sum\limits_{|j|=1}^k \|\chi^{k, \eps}_j \phi
v_n\|^{p'}_{\pL {p'}{\R^{d+m}}}\!\Big)^{\!1/p'}\hskip -3mm,
\end{align*}
where $C_{p'}$ is the constant from the corollary of the
Marcinkiewicz theorem (more precisely from \eqref{marz_bnd}), while $C_\phi=\|\phi
\|^{p'}_{\pL {q}{\R^{m}; \Lb\Rd}}$. By
letting  $\eps \to 0$, we conclude
\begin{align*}
\big|\langle b_k^{\cal R}, \phi \rangle\big|
\leq C_u C_{p'}\limsup \limits_{n\to \infty}
\Big(\sum\limits_{|j|=1}^k \|\chi^{k}_j \phi v_n\|^{p'}_{\pL {p'}{\R^{d+m}}}\Big)^{1/p'}
\end{align*}
since $\chi^{k, \eps}_j\to \chi^k_j$ in $\pL {\tilde r}K$ for any $\tilde r\in\zoi 1\infty$. Thus, as
$$
\sum\limits_{|j|=1}^k \|\chi^{k}_j \phi v_n\|^{p'}_{\pL {p'}{\R^{d+m}}}=\|\phi
v_n\|^{p'}_{\pL {p'}{\R^{d+m}}}\leq
\left(\|\phi\|_\pL{p'}{\R^{m};\pL{\bar{p}'}\Rd} \|v_n\|_{\pL {q}\Rd}\right)^{ p'}\, ,
$$
it follows
\begin{equation*}
\lim\limits_{k\to \infty}\big|\langle b_k^{\cal R}, \phi \rangle\big| \leq C_u C_{p'}C_{v} \|\phi\|_{\pL{p'}{\R^{m};\pL{\bar{p}'}\Rd}}.
\end{equation*}
Since $\Cc{\R^{d+m}}$ is dense in $\pL{p'}{\R^{m};\pL{\bar{p}'}\Rd}$
we conclude that the sequence $(b_k^{\cal R})$ is bounded in
$\pL{p}{\R^{m};\pL{\bar{p}}\Rd}$. Concluding the same for  $(b_k^{\cal I})$, and denoting respectively the limits by $b^{\cal R}, b^{\cal I}$, one gets that  $b=b^{\cal R}+ b^{\cal I}$
satisfies \eqref{cond}. The result now follows from Theorem
\ref{thm1}.
\end{proof}

Now, we can prove the main result of the paper.

\begin{theorem}
\label{t-main} For the sequence of equations \eqref{general} we assume
\begin{itemize}

\item the coefficients of
satisfy conditions ${\bf a)}$, ${\bf b)}$;

\item the principal symbol \eqref{glavni} satisfies the homogeneity assumption \eqref{gen_homog} and the restrictive
non-degeneracy condition \eqref{rndc};

\item $u_n \dscon 0$ in $\pL{p}{\R^{d+m}}$, for some
$p>1$.
\end{itemize}
Then for any $\rho\in \Cc{\R^m}$ the sequence of averaged
quantities $\big(\int_{\R^m} \rho(\my) u_n(\mx,\my)d\my\big)$
 converges to 0 strongly in $\Ljl\Rd$.
\end{theorem}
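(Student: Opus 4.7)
Set $w_n(\mx)=\int_{\R^m}\rho(\my)u_n(\mx,\my)\,d\my$. H\"older's inequality and the compact support of $\rho$ give $(w_n)$ bounded in $\pL p\Rd$, and $u_n\dscon 0$ in $\pL p{\R^{d+m}}$ forces $w_n\dscon 0$. By Lemma \ref{DHM} it is enough to show, for each fixed $l\in\N$, that $T_l(w_n)\to 0$ in $\Ljl\Rd$; since $(T_l(w_n))$ is uniformly bounded in $\Lb\Rd$, this further reduces to proving $\int_K|T_l(w_n)|^2\,d\mx\to 0$ along some subsequence for every compact $K\subset\Rd$.

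\textbf{Setting up the H-distribution.} Fix $K$ and a cut-off $\tilde\phi\in\Cc{\Rd}$ equal to $1$ on $K$. Extracting (unrelabelled) so that $T_l(w_n)\dscon\omega^l$ weakly-$\ast$ in $\Lb\Rd$, set $v_n:=\tilde\phi\bigl(T_l(w_n)-\omega^l\bigr)$; this sequence is bounded in every $\pL q\Rd$, $q<\infty$, has compact support, and converges weakly to zero. Theorem \ref{prop_repr} then produces (after a further extraction) an H-distribution $B_l$ attached to $(u_n)$ and $(v_n)$, which by Corollary \ref{korolar} extends to a continuous functional on $\pL{p'}{\R^m;\pL{\bar p'}{\Rd;\pC d\Pd}}$.

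\textbf{Localisation from the equation.} The decisive step is to establish
$$B_l\bigl(\phi,\,|A|^2/(|A|^2+\delta)\bigr)=0\,,\qquad \delta>0\,,$$
for every admissible $\phi$. One pairs the weak equation \eqref{defws} against a test function of the form $g_n(\mx,\my)=\tilde\chi(\mx,\my)\bigl({\cal T}^1\!\circ\!{\cal A}_{\psi_{\delta,\Pd}}\bigr)(v_n)(\mx)$, where $\tilde\chi\in\Cc{\R^{d+m}}$, ${\cal T}^1$ is the smoothing operator from Lemma \ref{l1IIv} with symbol $|\mxi|_\mbeta^{-1}$, and $\psi_\delta(\mx,\my,\mxi)=\bar A(\mx,\my,\mxi)/(|A(\mx,\my,\mxi)|^2+\delta)$ is the regularised pseudo-inverse of $A$ restricted to $\Pd$. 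Using \eqref{sum1}, the composite $(-\pa)^{\malpha_k}\!\circ\!{\cal T}^1\!\circ\!{\cal A}_{\psi_{\delta,\Pd}}$ is a bounded Fourier multiplier on $\pL q\Rd$ with symbol $-(2\pi i\pi_\Pd(\mxi))^{\malpha_k}\psi_{\delta,\Pd}(\mxi)$, and summation over $k\in I'$ produces $-|A|^2/(|A|^2+\delta)$ on $\Pd$. Three error classes must be controlled: (i) terms in which a derivative falls on $\tilde\chi$ produce smoothing of $v_n$ and hence compact, vanishing, contributions; (ii) the commutators $[a_k,\,{\cal A}_{\psi_{\delta,\Pd}}]$ with the low-regularity coefficients $a_k$ vanish against the weakly null $(v_n)$ by Lemma \ref{CommLemma}; (iii) the source pairing $\int\bigl\langle G_n,\overline{\pa^\mkappa_\my g_n}\bigr\rangle\,d\my$ tends to zero, because Lemma \ref{l1IIv} places $g_n$ uniformly in $\Lb{\R^m;\W\mbeta q\Rd}$, exactly the space dual to that in which {\bf b)} provides compactness of $(G_n)$.

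\textbf{Passage $\delta\to 0$ and conclusion.} The restrictive non-degeneracy \eqref{rndc} together with \eqref{ggnc} yields $|A|^2/(|A|^2+\delta)\to 1$ in $\pL{p'}{\R^m;\pL{\bar p'}{\Rd;\pC d\Pd}}$ as $\delta\to 0$, so continuity of $B_l$ forces $B_l(\phi,1)=0$ for every admissible $\phi$. Choosing $\phi(\mx,\my)=\chi_K(\mx)\rho(\my)$, together with a localiser $\phi_2\equiv 1$ on $\supp\tilde\phi$ in the Theorem \ref{prop_repr} pairing, noting that $\psi=1$ is the identity multiplier and $\tilde\phi=1$ on $K$, and using the pointwise identity $w_n\overline{T_l(w_n)}=|T_l(w_n)|^2$ from \eqref{trunc} combined with $\int_K w_n\overline{\omega^l}\,d\mx\to 0$ (from $w_n\dscon 0$), we obtain $\int_K|T_l(w_n)|^2\,d\mx\to 0$; Lemma \ref{DHM} then closes the argument. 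The hardest step is the third one: transforming the low-regularity, heterogeneous, anisotropic equation \eqref{general} into the dual identity $B_l(\cdot,|A|^2/(|A|^2+\delta))=0$ is delicate because of the coupling between the coefficients' roughness, the anisotropic multiplier calculus, and the dual-space matching needed to kill the source. The interplay of Lemma \ref{CommLemma}, Lemma \ref{l1IIv} and assumption {\bf b)} is precisely what the hypotheses $\bar p\in\oi 1p$ and {\bf b)} are calibrated to enable.
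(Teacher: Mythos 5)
Your overall architecture coincides with the paper's: reduce to truncated averages via Lemma \ref{DHM}, build the H-distribution $B_l$ from $(u_n)$ and the (recentred, localised) truncations, kill $B_l$ using the equation and the regularised inverse $\overline{A}/(|A|^2+\delta)$, pass $\delta\to 0$ via \eqref{ggnc}, and unwind with $\psi=1$. The gap is in how you execute the ``decisive step.'' You build the test function $g_n=\tilde\chi\,({\cal T}^1\circ{\cal A}_{\psi_{\delta,\Pd}})(v_n)$ with $\psi_\delta(\mx,\my,\mxi)=\overline{A}/(|A|^2+\delta)$. Since $A$ depends on $(\mx,\my)$ through the coefficients $a_k$, which are merely in ${\rm L}^{p'}(\R^m;{\rm L}^{\bar p'}(\R^d))$, the object ${\cal A}_{\psi_{\delta,\Pd}}$ is not a Fourier multiplier at all; it would have to be a pseudodifferential operator with a symbol that is only Lebesgue-measurable in $(\mx,\my)$, and no boundedness on $\pL q\Rd$ (let alone the mapping into $\W\mbeta q\Rd$ needed to kill the source term via {\bf b)}) is available for such an operator. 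Your error class (ii) compounds this: Lemma \ref{CommLemma} controls commutators $[\,b,{\cal A}_{\psi_\Pd}]$ only for \emph{continuous} $b\in\Cnl\Rd$ and for symbols $\psi\in\pC d\Pd$ depending on $\mxi$ alone, so it cannot absorb commutators with the rough coefficients $a_k$ or with an $(\mx,\my)$-dependent symbol.

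The paper avoids both problems by splitting your single step in two. First, it tests the equation with $g_n=\rho_1(\my)({\cal T}^1\circ{\cal A}_{\psi_\Pd})(\ph_1 v_n)$ for a symbol $\psi\in\pC d\Pd$ depending only on $\mxi$; the rough coefficients $a_k$ never meet the multiplier -- they enter the H-distribution through its first (Lebesgue) argument, multiplying $u_n$, so no commutation with $a_k$ is needed. Decomposing $(-2\pi i\pi_\Pd(\mxi))^{\malpha_k}$ via \eqref{sum1} and discarding the subprincipal terms by Lemma \ref{l1IIv} yields the localisation principle $A\,B_l=0$ as an identity of functionals. Only then, in a second step, is the rough function $\phi\,\psi\,\overline{A}/(|A|^2+\delta)$ inserted -- not as an operator acting on $v_n$, but as a test object for the already-established identity $AB_l=0$; this is legitimate precisely because Corollary \ref{korolar} extends $B_l$ to a continuous functional on $\pL{p'}{\R^m;\pL{\bar p'}{\Rd;\pC d\Pd}}$, a space that contains $\overline{A}/(|A|^2+\delta)$. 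You cite Corollary \ref{korolar} but deploy it in the wrong place: it is the device that lets you test with the rough regularised inverse \emph{after} localisation, not a licence to put that inverse inside a multiplier operator \emph{before} it. As written, your step three does not go through; reorganised into the paper's two-stage form, the rest of your argument (including the concluding computation with $\psi=1$ and $w_n\overline{T_l(w_n)}=|T_l(w_n)|^2$) is sound.
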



\begin{proof}

Fix $\rho\in \Cc{\R^m}$, $\ph\in\Lbc\Rd$,  and $l\in \N$. Denote by $V_l$ a weak
$\ast$ ${\rm L}^\infty(\R^{d+m})$ limit along a subsequence of truncated averages defined by $V^l_n=\ph T_l ( \int_{\R^m} \rho(\tilde\my)
u_n(\cdot,\tilde\my) d\tilde\my)$, where $T_l$ is the truncation operator introduced in \eqref{trunc}. Denote $v^l_n=V^l_n-V_l$ and remark
that $v^l_n\buildrel \ast \over \dscon 0$ in
${\rm L}^\infty(\R^{m})$ with respect to $n$.

Next, let $B_l$ be the H-distribution defined in Theorem
\ref{prop_repr} corresponding to  (sub)se\-quen\-ces (of) $u_n$ and $v^l_n$.


Take a dual product of \eqref{general} with the test function $g_n$
of the form (${\cal T}^1$ below is defined in Lemma \ref{l1IIv})
\begin{equation*}
g_n(\mx,\my)=\rho_1(\my) ({\cal T}^1\circ {\cal
A}_{\psi_{\Pd}}) (\ph_1 v_n)(\mx),
\end{equation*}
where  $\psi\in \pC d\Pd$, $\ph_1\in \Cbc\Rd$, and $\rho_1 \in {\rm
C}_c^{|\mkappa|}(\R^m)$ are arbitrary test functions, while $\mkappa$ is the multi-index appearing
in \eqref{general} (see \eqref{defws}). We get

\begin{align*}
\sum\limits_{k\in I}\int_{\R^{d+m}}
a_k(\mx,\my)u_n(\mx,\my)
\bar\rho_1(\my)
\overline{{\cal A}_{\psi_{\Pd}}\circ{\cal
A}_{(1-\theta(\mxi))\frac{
(-2\pi i \mxi)^{\malpha_{k}}}{|\mxi|_{\mbeta}}}(\ph_1 v_n)(\mx)}
d\mx d=
o_n(1),
\end{align*}
where $|\mxi|_{\mbeta}$ is defined in   \eqref{projct}. The right-hand side of the last expression tends to zero as $n\to \infty$ as, by assumption {\bf b)}, the sequence
 $(G_n)$ of functions on the right hand side of \eqref{general}
converges strongly to zero in $\Lj{\R^m;\W{-\mbeta}{q}{\R^d}}$, while, according to Lemma \ref{l1IIv}, the
multiplier operator ${\cal T}^1 \circ {\cal A}_{\psi_{\Pd}}:
\pL{q}\Rd \to \W\mbeta{q}{\R^d}$ is bounded .

Rewriting the last relation  and passing to the limit we get
\begin{align}
\label{bd102} \lim_n &\sum\limits_{k\in I}\int_{\R^{d+m}}
a_k(\mx,\my)u_n(\mx,\my)\bar \rho_1(\my)\times
\\&\qquad
\times \overline{\Big({\cal A}_{\psi_{\Pd}}\circ
{\cal A}_{{k1}}\circ\dots \circ {\cal
A}_{{kd}}\circ {\cal
T}^{1-\sum\limits_{j=1}^d\frac{\alpha_{kj}}{\beta_j}}\Big)
(\ph_1 v_n)(\mx)} d\mx d \my=0, \nonumber
\end{align}
where  ${\cal A}_{{kj}}$,
is the multiplier operator with the symbol
$$
(1-\theta(\mxi)) \Big(-2\pi i \pi_{\Pd}(\mxi)\Big)^{\alpha_{kj}}_j=
(1-\theta(\mxi)) \frac{(-2\pi i\xi_j)^{\alpha_{kj}}}{|\mxi|_{\mbeta}^{\alpha_{kj}/\beta_j}}\,.
$$
Since the powers $\alpha_{kj}$, $j=1,\dots,d$, are
either grater than $d$ or natural numbers, the above symbol is the composition of the projection $\pi_{\Pd}$ and a smooth function (of class $\CC^d$), thus satisfying conditions of Theorem \ref{m1}. Thus, the corresponding
  operators ${\cal A}_{(-\pi_{\Pd})^{\alpha_{kj}}_j}$,
$j=1,\dots,d$ are $\LL p$ continuous and satisfy bound \eqref{marz_bnd}.

According to \eqref{sum1} and the definition of the main
symbol, we conclude that for every $k \notin I'$ it must be
$\sum\limits_{j=1}^d\frac{\alpha_{kj}}{\beta_j}<1$. By means of  Lemma
\ref{l1IIv} we conclude that for such an index $k$ the limit of the integral in \eqref{bd102} vanishes, and, due to the arbitrariness of test functions, the relation takes the form
\begin{equation}
\label{vv2} A B_l=0,
\end{equation} where $A$ is the principal symbol given by \eqref{glavni}.

%

According to Corollary \ref{korolar}, we can test \eqref{vv2} on the
function
$$
\phi(\mx,\my) \psi(\mxi)\frac{\overline{A(\mx,\my,\mxi)}}{|A(\mx,\my,\mxi)|^2+\delta},
$$ for an arbitrary $\phi\in \Cbc{\R^{d+m}}, \psi\in \pC d{\Pd}$. Thus, we obtain
\begin{align*}
&\Dupp{B_l}{\phi(\mx,\my)\psi(\mxi)\frac{|{A(\mx,\my,\mxi)}|^2}{|A(\mx,\my,\mxi)|^2+\delta}}=0\,, \nonumber
\end{align*}
and by letting $\delta \to 0$, using \eqref{ggnc} and the continuity of the functional $B_l$,
we conclude
$$
B_l = 0, \ \ \forall l\in \N.
$$
 From the definitions of the H-distributions and the truncation operator $T_l$,
 we conclude by taking in \eqref{rev1} test functions $\psi=1$ and $\phi_1\phi_2=\ph \times\rho$ for the previously chosen $\ph$ and $\rho$
 (see the beginning of the proof):
\begin{align}
\label{bd103} &0=\lim\limits_{n\to \infty} \int_{\Rd } \ph^2(x)
\Big|T_l \int_{\R^m}u_n(\mx,\my)\rho(\my)d\my\Big|^2 d\mx, \quad
l\in \N.
\end{align}
Now, using Lemma \ref{DHM}, we conclude that
$$
\big(\int_{\R^m} \rho(\my) u_n(\cdot,\my)d\my \big) \ \
\text{is strongly precompact in $\Ljl\Rd$}.
$$
\end{proof}

\section{Optimal velocity averaging in ${\rm L}^p$, $p\geq 2$, framework}

Using the method from Theorem \ref{t-main} we are able to
optimize the velocity averaging results when the sequence of
solutions to \eqref{general} are bounded in $\pL p{\R^{d+m}}$ for some
$p\geq 2$, under the classical non-degeneracy conditions given by
\eqref{kingnl}. We shall need the extension of the H-measures
introduced in \cite{LM2} whose existence and properties are restated
in the next theorem.
\begin{theorem}
\label{lfeb518-r}
Assume that a sequence $(u_n)$
 converges weakly to zero in $\Ld{\R^{d+m}}$ $\cap$ $ \Ld{\Rm; \pL p{\Rd}}$, $p\geq2$. Then the\-re exists a  measure $\mu \in \Ldws{\R^{2m}\!;
{\cal M}_{b}(\Rd\times \Pd)}$ such that for all $\phi_1\in \Ld{\Rm;
\pL {\tilde s'}{\Rd}}$, $\frac{1}{\tilde{p}'}+\frac{2}{p}=1$ (with
$\Lb\Rd$ being replaced by $\Cnl\Rd$ if $p=2$), $\phi_2\in
\Ldc{\R^{m};\Cnl\Rd}$,  and $\psi\in \pC d{\Pd}$ it holds
\begin{equation*}
\begin{split}
\lim\limits_{n'}\int\limits_{\R^{2m}}\int\limits_{\Rd}
&(\phi_1 u_{n'})(\mx,
\my)
\,\Bigl(\overline{{\cal A}_{\psi_\Pd} \,\phi_2 u_{n'}(\cdot,
\tilde\my)}\Bigr)(\mx) d\mx d\my d\tilde\my\\
&=\int\limits_{\R^{2m}} \langle
\mu(\my,\tilde\my,\cdot,\cdot),{\phi}_1(\cdot,\my)\overline{\phi_2}
(\cdot,\tilde\my)\otimes\overline\psi\rangle d\my d\tilde\my\,,
\end{split}
\end{equation*}
where ${\cal A}_{\psi_\Pd}$ is the (Fourier) multiplier operator on
$\R^d$ associated to $\psi\circ \pi_\Pd$.

Furthermore, the operator $\mu$   has the
form
\begin{equation}
\label{repr_1} \mu(\my, \tilde\my,\mx,\mxi)=f(\my,
\tilde\my,\mx,\mxi)\nu(\mx,\mxi) d\my d\tilde\my,
\end{equation} where $\nu\in {\cal M}_b(\R^d\times \Pd)$ is a non-negative scalar
Radon measure whose $\Rd$ projection $\int_\Pd d\nu(\mx,\mxi)$  can be extended to a bounded functional on
$\pL{\tilde p'}\Rd$ in the case $p>2$, while $f$ is a function from
$\Ld{\R^{2m}; \Lj{\Rd\times \Pd:\nu}}$.
\end{theorem}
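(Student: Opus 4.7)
The plan is to construct the limit object as the weak-$\ast$ limit of an equibounded family of trilinear functionals, and then to extract the factorisation by exploiting the non-negativity that appears on the diagonal. To this end, I would define
$$
B_n(\phi_1,\phi_2,\psi):=\int_{\R^{2m}}\!\int_\Rd(\phi_1u_n)(\mx,\my)\,\overline{{\cal A}_{\psi_\Pd}(\phi_2u_n(\cdot,\tilde\my))(\mx)}\,d\mx\,d\my\,d\tilde\my.
$$
A H\"older inequality in $\mx$ coupling the three factors $\|\phi_1(\cdot,\my)\|_{\pL{\tilde p'}\Rd}$, $\|u_n(\cdot,\my)\|_{\pL p\Rd}$ and $\|{\cal A}_{\psi_\Pd}(\phi_2u_n(\cdot,\tilde\my))\|_{\pL p\Rd}$ is admissible thanks to $\tfrac{1}{\tilde p'}+\tfrac{2}{p}=1$, and the last factor is controlled by the Marcinkiewicz bound \eqref{marz_bnd} by $C\|\psi\|_{\pC d\Pd}\|\phi_2(\cdot,\tilde\my)\|_{\Lb\Rd}\|u_n(\cdot,\tilde\my)\|_{\pL p\Rd}$. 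An additional $\Ld$ H\"older in $(\my,\tilde\my)$ then yields
$$
|B_n(\phi_1,\phi_2,\psi)|\le C\|\psi\|_{\pC d\Pd}\|\phi_1\|_{\Ld{\Rm;\pL{\tilde p'}\Rd}}\|\phi_2\|_{\Ldc{\Rm;\Cnl\Rd}},
$$
uniformly in $n$.

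By Banach--Alaoglu one extracts a subsequential limit $B$, and an adaptation of a Schwartz-kernel-type representation (in the spirit of Theorem \ref{thm1}, applied in the $\pL{\tilde p'}\Rd$ slot via the density function constructed there, and in the $\pC d\Pd$ and parameter slots by separability) identifies $B$ with an element $\mu\in \Ldws{\R^{2m};{\cal M}_b(\Rd\times\Pd)}$ obeying the advertised integral identity. For the factorisation \eqref{repr_1}, I would specialise to the diagonal $\my=\tilde\my$, $\phi_2=\overline\phi_1=:\overline\phi$ and a real non-negative $\psi$. The classical computation from the theory of H-measures (G\'erard--Tartar) shows that this diagonal expression is a non-negative quadratic form, which gives rise to a scalar non-negative Radon measure $\nu$ on $\Rd\times\Pd$. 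A Cauchy--Schwarz argument off the diagonal then produces a bound of the form $|\langle\mu(\my,\tilde\my,\cdot,\cdot),F\rangle|\le h(\my,\tilde\my)\|F\|_{\Ld{\Rd\times\Pd;\nu}}$ with $h\in\Ld{\R^{2m}}$, and a Radon--Nikodym/disintegration argument in this Banach-space-valued setting supplies the density $f\in\Ld{\R^{2m};\Lj{\Rd\times\Pd:\nu}}$ with $\mu=f\nu\,d\my\,d\tilde\my$. When $p>2$, the $\Rd$-projection of $\nu$ is the standard defect measure of the sequence $\int_\Rm|u_n(\cdot,\my)|^2\,d\my$, which is bounded in $\pL{p/2}\Rd$; by duality it extends to a bounded functional on $\pL{\tilde p'}\Rd$.

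The main obstacle is the disintegration step: one must extract a single scalar measure $\nu$ on $\Rd\times\Pd$, independent of the velocity parameters, while simultaneously producing a density $f$ with the precise $\Ld$-integrability in $(\my,\tilde\my)$. The diagonal non-negativity has to be sharp enough to supply both objects at once, and the measurability of $f$ in the parameters requires a careful separable selection argument---this follows the analogous factorisation carried out in \cite{LM2}.
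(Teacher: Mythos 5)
The paper itself contains no proof of Theorem \ref{lfeb518-r}: it is explicitly ``restated'' from \cite{LM2}, so the only comparison available is with the construction carried out there, which is the classical G\'erard--Tartar scheme you are following. Your equiboundedness estimate (H\"older in $\mx$ with $\tfrac1{\tilde p'}+\tfrac2p=1$, the Marcinkiewicz bound \eqref{marz_bnd}, then Cauchy--Schwarz in $(\my,\tilde\my)$) is correct, as is the identification of the $\Rd$-projection of $\nu$ for $p>2$ via Minkowski's integral inequality and $\tilde p'=(p/2)'$.

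The step that does not work as written is the extraction of $\nu$ by ``specialising to the diagonal $\my=\tilde\my$''. The functional $B$ integrates $\my$ and $\tilde\my$ independently, so choosing $\phi_2$ conjugate to $\phi_1$ only yields the H-measure of the averaged sequence $V_n=\int_{\R^m}\phi_1 u_n\,d\my$; this gives positive semi-definiteness of the kernel $\mu(\my,\tilde\my,\cdot,\cdot)$ in the velocity variables, but not a single scalar measure on $\Rd\times\Pd$ dominating all of $\mu$. The object you actually need is the trace, with the \emph{same} $\my$ in both slots,
$$\int_{\R^{m}}\int_{\Rd}(\phi u_n)(\mx,\my)\,\overline{{\cal A}_{\psi_\Pd}\bigl(\phi u_n(\cdot,\my)\bigr)(\mx)}\,d\mx\,d\my=\int_{\R^{m}}\int_{\Rd}\psi\bigl(\pi_{\Pd}(\mxi)\bigr)\,\bigl|\widehat{\phi u_n(\cdot,\my)}(\mxi)\bigr|^{2}\,d\mxi\,d\my\;\ge 0,$$
and this is \emph{not} a value of $B$ at any admissible test function: it corresponds to restricting the kernel to the diagonal, which is a $d\my\,d\tilde\my$-null set on which the weakly-$\ast$ measurable map $\mu$ is simply not defined. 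So $\nu$ must be constructed as an independent weak-$\ast$ limit of these non-negative quantities, and the domination $|\langle\mu(\my,\tilde\my),\cdot\rangle|\le h(\my,\tilde\my)\,\|\cdot\|_{\Ld{\Rd\times\Pd;\nu}}$ has to be proved by Cauchy--Schwarz at the level of the $n$-th integrals, before passing to the limit, together with the commutation lemma (Lemma \ref{CommLemma}) to reduce the dependence to the product $\phi_1\overline{\phi_2}$. Note also that without this positivity input, Banach--Alaoglu plus the $\pC d\Pd$-bounds only give a distribution of finite order in $\mxi$, so the measure structure of $\mu$ cannot be assumed prior to this step, as your ordering suggests. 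With the construction reordered in this way, the disintegration you defer to \cite{LM2} does go through.
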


By
$\Ldws{\R^{2m}; {\cal M}_{b}(\Rd\times \Pd)}$ we have denoted the Banach space of
weakly $\ast$ measurable functions $\mu: \R^{2m} \to {\cal
M}_{b}(\Rd\times \Pd)$  such that  $\int_{\R^{2m}}
\Nor{\mu(\my, \tilde\my)}^2  d\my d\tilde\my <\infty$.

An H-measure defined above is an object associated to a single $\LL
2$ sequence. However, there are no obstacles to adjoin a similar
object to different sequences as in the case of the H-distributions
(Theorem \ref{prop_repr}). This can be done by forming a vector
sequence, and consider non-diagonal elements of corresponding
(matrix) H-measure. Another way is to joint two sequences in a
single one by means of a dummy variable, as it is done in the next
theorem.

\begin{theorem}
\label{th10} Let $(u_n)$ be a bounded sequence in $\pL
2{\R^{d+m}}\cap\pL 2{\R^{m}; \pL p{\Rd}}$, for some $p\geq 2$, and
let $(v_n)$ be a sequence weakly converging to zero in $\pL
2{\R^{d}} \cap{\rm L}^q (\R^{d})$ where $1/q+1/p<1$. Then, after
passing to a subsequence (not relabeled), there exists a  measure
$\mu \in \Ldws{\R^{m}; {\cal M}_{b}(\Rd\times \Pd)}$ such that for
all $\phi_1 \in {\rm L}^{2}(\R^{m}; \Cnl\Rd)$, $\phi_2 \in \Cnl\Rd$,
$\psi\in \pC d{\Pd}$, we have
\begin{equation}
\label{rev11}
  \langle \mu, \phi_1 \overline{\phi_2} \otimes \overline\psi \rangle=\lim\limits_{n\to
\infty}\int_{\R^{d+m+k}} \phi_1(\mx, \my)u_n(\mx,\my)
\overline{{\cal A}_{\psi_\Pd} \big(\phi_2 v_n\big)(\mx)} d\mx
d\my\,.
\end{equation}

Furthermore, the measure $\mu$ is of the form
\begin{equation}
\label{repr} \mu(\my,\mx,\mxi)=f(\my,\mx,\mxi) d\nu(\mx,\mxi) d\my,
\end{equation} where $\nu\in {\cal M}_b(\R^d\times \Pd)$ is a
non-negative, bounded, scalar Radon measure, while $ f\in\Ld{\R^{m};
\Lj{\Rd\times \Pd:\nu}}$. We call it the generalised H-measure
corresponding to (sub)se\-quen\-ces (of) $(u_n)$ and $(v_n)$.
\end{theorem}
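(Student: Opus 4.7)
The plan is to mimic the construction of Theorem \ref{prop_repr} to produce the bilinear functional and then to upgrade it to a measure by comparison with the classical (scalar) H-measure of $(v_n)$, finally reading off the representation \eqref{repr} via Radon-Nikodym.

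First I would establish existence of $B$. Define
\[
B_n(\phi_1\overline{\phi_2},\overline\psi) = \int_{\R^{d+m}} \phi_1(\mx,\my) u_n(\mx,\my)\overline{{\cal A}_{\psi_\Pd}(\phi_2 v_n)(\mx)}\, d\mx d\my.
\]
The commutation lemma (Lemma \ref{CommLemma}) applies here because the condition $1/q+1/p<1$ provides an intermediate $\LL r$ scale between $\LL 2$ and $\LL q$ in which the commutator $[\phi_2,{\cal A}_{\psi_\Pd}]v_n$ converges strongly to zero, and $\phi_1 u_n$ is bounded in the dual scale. Factoring $\phi_2$ through the multiplier as in \eqref{rev2} and then applying H\"{o}lder's inequality in $\mx$ and $\my$ together with the Marcinkiewicz bound \eqref{marz_bnd} yields
\[
|B_n(\phi_1\overline{\phi_2},\overline\psi)| \leq C\,\|\psi\|_{\pC d\Pd}\|\phi_1\|_{\pL{2}{\R^m;\Cnl\Rd}}\|\phi_2\|_{\Cnl\Rd}
\]
uniformly in $n$. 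A diagonal extraction against countable dense sets in $\pL{2}{\R^m;\Cnl\Rd}$ and $\pC d\Pd$ produces the limiting functional $B$ satisfying \eqref{rev11}, which by continuity depends only on the product $\phi_1\overline{\phi_2}$.

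To upgrade $B$ into a measure with the representation \eqref{repr}, I would bring in the classical scalar H-measure $\nu\in{\cal M}_b(\Rd\times\Pd)$ of $(v_n)$ alone, which is non-negative by the Tartar-G\'erard theory (adapted to the anisotropic manifold $\Pd$ as in \cite{LM2}). A Cauchy-Schwarz estimate in $\mx$ applied to $B_n$, followed by H\"{o}lder in $\my$ and passage to the limit, yields
\[
|B(\phi_1\overline{\phi_2},\overline\psi)|^2 \leq C_u\,\|\phi_1\|^2_{\pL{2}{\R^m;\Cnl\Rd}}\,\langle\nu,|\phi_2|^2\otimes|\psi|^2\rangle,
\]
where $C_u$ bounds $\|u_n\|_{\Ld{\R^{d+m}}}$. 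Fixing $\phi_2$ and $\psi$, the Riesz representation theorem applied in the $\my$-variable gives a weakly-$\ast$ measurable map $\my\mapsto\mu(\my,\cdot)\in{\cal M}_b(\Rd\times\Pd)$ representing $B$. The above bound forces $\mu(\my,\cdot)\ll\nu$ for almost every $\my$, and the Radon-Nikodym theorem produces the density $f(\my,\mx,\mxi)=d\mu(\my,\cdot)/d\nu$; the $\Ld{\R^m}$-integrability of the total variation $\my\mapsto\|\mu(\my,\cdot)\|_{\rm TV}$, inherited from the same Cauchy-Schwarz estimate, gives $f\in\Ld{\R^m;\Lj{\Rd\times\Pd:\nu}}$.

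The main obstacle will be making the Cauchy-Schwarz / domination step rigorous, given that $(v_n)$ is $\my$-independent while $(u_n)$ carries the full $\my$-variable; the fibered Radon-Nikodym argument requires a careful interchange of $\my$-integration with the weak-$\ast$ limit. A cleaner route I would probably take to sidestep this technicality is to reduce to Theorem \ref{lfeb518-r} by applying it to the vector-valued combination $\alpha u_n(\mx,\my) + \beta\chi(\my)v_n(\mx)$ for a fixed $\chi\in\Cc{\R^m}$ of unit $\Ld{\R^m}$-mass and polarizing in the scalars $\alpha,\beta$: the cross term in $\alpha\beta$ is precisely our $B$, and the representation \eqref{repr} together with the density $f$ is inherited directly from \eqref{repr_1}.
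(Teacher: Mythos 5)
Your second, ``cleaner'' route is essentially the paper's own proof. The paper joins the two sequences into a single $\LLd$ sequence by means of a dummy scalar variable $\lambda$ (setting $W_n=u_n-u$ for $\lambda\in\oi01$ and $W_n=\rho(\my)v_n$ for $\lambda\in\oi{-1}0$), applies Theorem \ref{lfeb518-r} to $W_n$, and reads off the cross term by taking characteristic functions of $\zi01$ and $\zi{-1}0$ as the $\lambda$-factors of the test functions; your linear combination $\alpha u_n+\beta\chi(\my)v_n$ with polarization is exactly the ``vector sequence / off-diagonal entry'' variant that the paper itself mentions as an equivalent alternative in the paragraph preceding the theorem, so the representation \eqref{repr} is inherited from \eqref{repr_1} in the same way. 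Two small points to repair in your version: (i) Theorem \ref{lfeb518-r} requires weak convergence to zero, while $(u_n)$ is only assumed bounded, so you must first subtract its weak limit $u$ (this is harmless for \eqref{rev11}, since the contribution of the fixed function $u$ tested against ${\cal A}_{\psi_\Pd}(\phi_2 v_n)\rightharpoonup 0$ vanishes in the limit); (ii) polarization with real scalars only recovers $\mu_{12}+\mu_{21}$, so you need complex $\alpha,\beta$ (or, equivalently, the full $2\times2$ matrix H-measure) to isolate the cross term from its conjugate. Your first route --- constructing $B$ directly as in Theorem \ref{prop_repr} and dominating it by the scalar H-measure $\nu$ of $(v_n)$ via Cauchy--Schwarz, then running a fibered Radon--Nikodym argument in $\my$ --- is genuinely different from the paper and would also work (the $\LLd(\nu)$ bound on the densities even gives $f(\my,\cdot)\in\Ld{\Rd\times\Pd:\nu}\subset\Lj{\Rd\times\Pd:\nu}$ since $\nu$ is bounded); its advantage is that it identifies $\nu$ concretely as the H-measure of $(v_n)$ alone, but the weak-$\ast$ measurability of $\my\mapsto\mu(\my,\cdot)$ and the interchange of the $\my$-integration with the limit are precisely the technicalities that the reduction to Theorem \ref{lfeb518-r} is designed to avoid, which is presumably why the paper takes that path.
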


\begin{proof} Denote by $u$ an $\LL 2$ weak limit of the sequence
$(u_n)$ along a (non-relabeled) subsequence. Fix an arbitrary
non-negative compactly supported $\rho\in \Cc{\R^m}$ with the total mass equal to one. Let
\begin{equation*}
W_n(\mx,\my,\lambda)=
\begin{cases}
(u_n-u)(\mx,\my), & \lambda\in \oi 01\\
\rho(\my)v_n(\mx), & \lambda\in \oi {-1}0\\
0, & {\rm else}.
\end{cases}
\end{equation*}
Clearly, we have that $W_n \rightharpoonup 0$ in $\Ld{\R^d\times
\R^m\times\R}$, and by Theorem \ref{lfeb518-r} it admits a measure
$\tilde{\mu}\in \Ldws{\R^{2(m+1)}; {\cal M}_b(\R^d\times \Pd)}$ such
that for any $\tilde{\phi}_1\!\in \!{\rm L}^{2}(\R^{m+1}; \Cnl\Rd)$,
$\tilde{\phi}_2\in \Ldc{\R^{m+1}; \Cnl\Rd}$, and $\psi \in \Cp\Pd$
it holds
\begin{equation}
\label{rev111}
  \langle \tilde{\mu}, \tilde{\phi}_1 \overline{\tilde{\phi}_2} \otimes \overline\psi \rangle=\lim\limits_{n\to
\infty}\int_{\R^{d+2(m+1)}} (\tilde\phi_1 W_n)(\mx,\my,\lambda)
\overline{{\cal A}_{\psi_\Pd} \big(\tilde\phi_2
W_n (\cdot,\tilde\my,\tilde\lambda)\big)(\mx)} d\mx d{\bf
w},
\end{equation} where ${\bf w}=(\my,\tilde{\my},\lambda,\tilde{\lambda})\in
\R^{2m+2}$.

According to the representation \eqref{repr_1}  the measure $\tilde{\mu}$ is of the form
\begin{equation*}
\tilde{\mu}=\tilde
f(\my,\tilde{\my},\lambda,\tilde{\lambda},\mx,\mxi)
d\nu(\mx,\mxi)d\my d\tilde\my d\lambda d\tilde{\lambda}, \ \ \my,
\tilde{\my} \in \R^m, \; \lambda,\tilde{\lambda} \in \R,
\end{equation*} where $\nu\in {\cal M}_b(\R^d\times \Pd)$ is a non-negative scalar
Radon measure, while $\tilde f$ is a function from
$\Ld{\R^{2(m+1)}; \Lj{\Rd\times \Pd:\nu}}$.

By taking in \eqref{rev111}
$\tilde{\phi}_1(\mx,\my,\lambda)=\phi_1(\mx,\my) \otimes
\theta_1(\lambda)$, and
$\tilde{\phi}_2(\mx,\tilde{\my},\tilde{\lambda})$ $= \phi_2(\mx)
\otimes \rho_2(\tilde \my)\otimes\theta_2(\tilde{\lambda})$, where
$\phi_1\in {\rm L}^{2}(\R^{m}; \Cnl\Rd)$ and $\phi_2\in \Cnl\Rd$ are
arbitrary test functions, while $\theta_{1}=\chi_{\zi 01},
\theta_{2}=\chi_{\zi {-1}0}$, and $\rho_2(\tilde \my)=1$ for
$\tilde{\my}\in {\rm supp}\rho$, we see that the measure
$$
d\mu(\my, \mx,\mxi)=\left(\int_{-1}^0\int_{0}^1\int_{\R^{m}}
\tilde f(\my,\tilde{\my},\lambda,\tilde{\lambda},\mx,\mxi)\,
d\tilde{\my} d\lambda d\tilde{\lambda} \right) d\nu(\mx,\mxi)d\my,
$$
satisfies \eqref{rev11}.
\end{proof}

\begin{remark}
The last theorem is stated for sequence of functions $u_n$ being in
$\LL 2$ space with respect to the velocity variable $\my$, as this
was the setting in which generalised H-measures have been defined in
\cite{LM2}. However, if in addition one assumes that $(u_n)$ is
bounded in $\pL p{\R^m;\pL s\Rd}$ for some $p\in \oi 1\infty$, then
a test function can be taken merely from  ${\rm L}^{p'}(\R^{m};
\Cnl\Rd)$.
\end{remark}

By using the above characterisation of H-measures we are able to improve the main result of the paper, namely Theorem\ref{t-main}, in the case $p\geq 2$ by assuming merely the classical non-degeneracy
condition \eqref{kingnl} instead of the restrictive one given by \eqref{rndc}. Note that due to the lower regularity assumptions on the  coefficients the following theorem also generalises the velocity averaging results provided in \cite{LM2}.
\begin{theorem}
 Assume that $u_n\dscon 0$  weakly in
${\rm L}^{p}(\R^{d+m})\cap {\rm L}^2(\R^{d+m})$, $p\geq 2$, where
$u_n$ represent weak solutions to \eqref{general} in the sense of
Definition \ref{weaksol} (with conditions {\bf a)} and {\bf b)} together with the homogeneity assumption \eqref{gen_homog} being
fulfilled). Furthermore, assume that the classical non-degeneracy
conditions \eqref{kingnl} are satisfied.


Then, for any $\rho\in \pLc{2}{\R^m}$,
$$
\int_{\R^m}u_n(\mx,\my)\rho(\my)d\my \str 0 \ \ \text{ strongly in
$\Ljl\Rd$}.
$$

\end{theorem}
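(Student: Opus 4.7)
The plan is to repeat the argument of Theorem \ref{t-main} with the generalised H-measure $\mu$ from Theorem \ref{th10} in place of the H-distribution $B_l$. Since $p\geq 2$, two simplifications are available: first, for $\rho\in \pLc 2{\R^m}$ and $\varphi\in \Lbc\Rd$ the averaged sequence
$$
v_n(\mx)=\varphi(\mx)\int_{\R^m}\rho(\tilde\my)u_n(\mx,\tilde\my)\,d\tilde\my
$$
already lies in $\Lbc\Rd$, weakly converges to zero, and feeds directly into Theorem \ref{th10} without any truncation; second, $\mu$ is a true measure with product representation $\mu=f(\my,\mx,\mxi)\,d\nu(\mx,\mxi)\,d\my$ whose $\Rd$-projection is (for $p>2$ by Theorem \ref{lfeb518-r}; for $p=2$ the restriction $u_n\in\LL p\cap\LL 2$ places us in the classical Gerard--Tartar framework, which is handled along parallel lines) absolutely continuous with respect to Lebesgue measure on $\Rd$.

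Reproducing the computation in the proof of Theorem \ref{t-main}, we pair the weak formulation of \eqref{general} with $g_n=\rho_1(\my)({\cal T}^1\circ{\cal A}_{\psi_\Pd})(\varphi_1 v_n)$ for arbitrary $\rho_1\in \Ckc{|\mkappa|}{\R^m}$, $\varphi_1\in \Cbc\Rd$ and $\psi\in \pC d\Pd$. The right-hand side vanishes by condition ${\bf b)}$ together with the boundedness ${\cal T}^1\circ{\cal A}_{\psi_\Pd}:\LL q\Rd\to \W\mbeta q\Rd$ (Lemma \ref{l1IIv}); non-principal terms ($k\notin I'$) vanish by the compactness of ${\cal T}^\gamma$ with $\gamma<1$; the principal contributions, rearranged via the commutation lemma (Lemma \ref{CommLemma}) and the homogeneity relation \eqref{sum1}, produce
$$
\int_{\R^{d+m}}\!\int_\Pd A(\mx,\my,\mxi)\rho_1(\my)\overline{\varphi_1(\mx)\psi(\mxi)}f(\my,\mx,\mxi)\,d\nu(\mx,\mxi)\,d\my=0.
$$
By density and a Corollary \ref{korolar}-type extension, this identity persists when $\overline{\psi(\mxi)}$ is replaced by $\overline{\chi(\mx,\my,\mxi)}$ for any test field $\chi$ of appropriate regularity.

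The removal of the factor $A$ is carried out by choosing $\overline\chi=\overline{\varphi_1\psi_0}\,A/(|A|^2+\delta)$ with $\psi_0\in \pC d\Pd$ arbitrary; the quotient is $\pC d$ in $\mxi$ thanks to the regularity assumption on the orders of derivatives in $A$. This gives
$$
\int\rho_1(\my)\overline{\varphi_1(\mx)\psi_0(\mxi)}\frac{|A|^2}{|A|^2+\delta}\,f\,d\nu\,d\my=0,
$$
and the crux is the passage $\delta\to 0$. Absolute continuity of $\pi_{\Rd}(\nu)$ forces $\nu((\Rd\setminus D)\times\Pd)=0$, where $D$ is the full-Lebesgue-measure set from \eqref{kingnl}; therefore for $\nu$-a.e.~$(\mx,\mxi)$ the classical non-degeneracy condition applies and yields $A(\mx,\my,\mxi)\neq 0$ for a.e.~$\my$. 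Consequently $|A|^2/(|A|^2+\delta)\to 1$ pointwise $\nu\otimes d\my$-a.e., and dominated convergence (with dominator $|\rho_1\varphi_1\psi_0 f|\in L^1(d\nu\otimes d\my)$, integrable since $f\in\Ld{\R^m;\Lj{\Rd\times\Pd;\nu}}$) produces $\langle\mu,\rho_1\overline{\varphi_1}\otimes\overline{\psi_0}\rangle=0$ for every admissible triple, whence $\mu\equiv 0$.

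Setting finally $\psi_0\equiv 1$, $\varphi_1=\varphi$, and $\rho_1\equiv 1$ on $\supp\rho$ in \eqref{rev11} yields
$$
\lim_n\int_\Rd\varphi^2(\mx)\Big|\int_{\R^m}\rho(\my)u_n(\mx,\my)\,d\my\Big|^2 d\mx=0,
$$
and strong $\Ldl\Rd$ (hence a fortiori $\Ljl\Rd$) convergence of the averaged sequence follows by arbitrariness of $\varphi$. The main obstacle of the argument is the $\delta\to 0$ passage, which hinges on two ingredients: the extension of $\mu$ to a test-function class containing $\bar A/(|A|^2+\delta)$, and the absolute-continuity property of $\pi_{\Rd}(\nu)$ that allows transfer of the Lebesgue-null exceptional set in \eqref{kingnl} into a $\nu$-null subset of $\Rd\times\Pd$; together these replace the stronger convergence \eqref{ggnc} that was required under the restrictive condition \eqref{rndc} in Theorem \ref{t-main}.
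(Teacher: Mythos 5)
Your overall strategy coincides with the paper's: derive the localization principle $A\mu=0$ from the weak formulation via Lemmas \ref{l1IIv} and \ref{CommLemma}, use the measure representation \eqref{repr} from Theorem \ref{th10}, test against $\overline{A}/(|A|^2+\delta)$, and pass $\delta\to 0$ using \eqref{kingnl}, dominated convergence, and the fact that the Lebesgue-null exceptional set of \eqref{kingnl} is $\nu$-null (a point you actually make more explicitly than the paper does). However, there is a genuine gap in your elimination of the truncation operator. You claim that $v_n=\varphi\int_{\R^m}\rho(\tilde\my)u_n(\cdot,\tilde\my)\,d\tilde\my$ ``already lies in $\Lbc\Rd$''; this is false. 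With $\rho\in\pLc 2{\R^m}$ and $u_n\in\Ld{\R^{d+m}}$, Cauchy--Schwarz in $\my$ only gives that the average is bounded in $\Ld\Rd$, not in $\LLb$. This matters in two places. First, the derivation of the localization principle pairs the equation with $g_n=\rho_1({\cal T}^1\circ{\cal A}_{\psi_\Pd})(\varphi_1 v_n)$ and needs $\varphi_1 v_n$ bounded in $\pL q\Rd$ for the \emph{specific} exponent $q=p\bar p/(p-\bar p)$ fixed by condition {\bf a)} (this is what makes the left-hand side pairing with $a_k u_n\in\Lj{\R^m;\pL{q'}\Rd}$ and the right-hand side pairing with $G_n\in\Lj{\R^m;\W{-\mbeta}{q'}\Rd}$ finite); that $q$ can be arbitrarily large, so membership in $\Ld\Rd\cap\pLc 2\Rd$ does not suffice. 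Second, even where $q=2$ would do, Theorem \ref{th10} requires $1/q+1/p<1$ strictly, which fails at $p=2$ with $q=2$; your parenthetical appeal to the ``classical Gerard--Tartar framework'' for $p=2$ does not repair this.

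The paper avoids both problems by working throughout with the truncated averages $v_n^l=\varphi\,T_l(\int\rho u_n\,d\my)$ minus their weak-$*$ limit, which lie in $\Lbc\Rd$ and hence in every $\pL q\Rd$; the price is that one only obtains $T_l(\int\rho u_n\,d\my)\to 0$ in $\Ldl\Rd$ for each fixed $l$, and the passage to $\Ljl\Rd$ convergence of the untruncated average then requires Lemma \ref{DHM} (via Lemma \ref{trunc-l}). Your proof omits this last step because it omits the truncation; once the truncation is reinstated, the appeal to Lemma \ref{DHM} becomes necessary and your argument reduces to the paper's. The rest of your reasoning (localization, extension of the functional to test fields containing $\overline A/(|A|^2+\delta)$, the $\delta\to 0$ limit) is sound and matches the paper.
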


\begin{proof}
We stick to the notations from Theorem \ref{t-main}.

Denote by $B_l$ the H-distribution corresponding to the sequences
$(u_n)$ and $(v^l_n)$.

Thus, from the proof of Theorem \ref{t-main}, we conclude that
$B_l$ satisfies localization principle given by \eqref{vv2}. We
wish to prove that from here, under condition \eqref{kingnl}, it
follows that $B_l \equiv 0$.

Since  the sequence $(u_n)$ is bounded in ${\rm L}^2(\R^{d+m})$
then, according to Theorem \ref{th10}, together with the sequence
$(v^l_n)$, it forms the H-measure which coincides with the
H-distribution $B_l$ at least on the space
$\Cc{\R^{d+m};\pC d\Pd}$ where therefore it admits the
representation given by \eqref{repr}. By using the density arguments and the continuity of the H-distribution on ${\rm
L}^{p'}(\R^m;{\rm L}^{\bar{p}'}(\R^d);\pC d\Pd)$ (provided by Corollary \ref{korolar}),  for an arbitrary test function $g \in {\rm
L}^{p'}(\R^m;L^{\bar{p}'}(\R^d);\pC d\Pd)$ it
holds

\begin{equation}
\label{repr_va} \langle B_l,g\rangle=\int_{\R^m} \int_{\R^d\times \Pd}
g(\my,\mx,\mxi) f_l(\my,\mx,\mxi)
d\nu_l(\mx,\mxi) d\my,
\end{equation} for some $\nu_l\in {\cal M}_b(\R^d\times \Pd)$ and $f_l\in\Ld{\R^{m};
\Lj{\Rd\times \Pd:\nu}}$.

Now, take an arbitrary $\delta>0$, and for a $\rho\in \Ldc{\R^m}$ and
$\phi \in \Cc{\R^{d};\pC d\Pd}$ consider the test function
\begin{equation*}
\frac{\rho(\my)\phi(\mx,\mxi)\overline{A(\mx,\mxi,\my)}}{|A(\mx,\mxi,\my)|^2
+\delta}.
\end{equation*}
The localisation principle \eqref{vv2} implies
\begin{align*}
&\Dupp{B_l}{\rho(\my)\phi(\mx,\mxi)\frac{|{A(\mx,\my,\mxi)}|^2}{|A(\mx,\my,\mxi)|^2+\delta}}=0\,, \nonumber
\end{align*}
 which by means of representation \eqref{repr} and Fubini's theorem
takes the form
\begin{equation}
\label{fin_4} \int_{\R^d\times \Pd}\int_{\R^{m}}
\frac{\rho(\my)\phi(\mx,\mxi)|A(\mx,\mxi,\my)|^2}{
|A(\mx,\mxi,\my)|^2 +\delta}f_l(\my,\mx,\mxi) d\my
d\nu_l(\mx,\mxi)=0.
\end{equation}
Let us denote
$$
I_\delta(\mx, \mxi)=  \int_{\R^{m}}
\rho(\my)\frac{|A(\mx,\mxi,\my)|^2}{ |A(\mx,\mxi,\my)|^2
+\delta}f_l(\my,\mx,\mxi)d\my\,.
$$
According to the non-degeneracy condition \eqref{kingnl}, we have
$$
I_\delta(\mx, \mxi)\to   \int_{\R^{m}}
\rho(\my)f_l(\my,\mx,\mxi)d\my,
$$
as $\delta\to 0$ for $\nu- {\rm a.e. }\, (\mx, \mxi) \in \R^d\times
\Pd$. By using the Lebesgue dominated convergence theorem, it
follows from \eqref{fin_4} after letting $\delta\to 0$:
\begin{align*}
&\Dup{B_l}{\rho\otimes\phi}= \int_{\R^d\times
\Pd}\int_{\R^{m}}\rho(\my)\phi(\mx,\mxi)f_l(\my,\mx,\mxi) d\my
d\nu_l(\mx,\mxi)
=0\,,\nonumber
\end{align*}
i.e. $B_l=0$ for every $l$.
Now, as in the proof of Theorem \ref{t-main}, we
conclude that the sequence of truncated averages $T_l \int_{\R^m}u_n(\mx,\my)\rho(\my)d\my$ is strongly precompact in $\Ljl\Rd$, which together with  Lemma
\ref{DHM}  concludes the theorem.

\end{proof}

\section{Appendix}

We are able to use previously introduced techniques to point out a
connection between the H-distribu\-tions that we have introduced and
the H-measures that we used in \cite{LM2} (Theorem \ref{lfeb518-r}).
We shall need a kind of truncation function again:
$$
u^l=
\begin{cases}
u, & l<|u|\leq l+1\\
0, & else.
\end{cases}
$$

The following theorem holds.

\begin{theorem}
\label{prop_repr1} Let $(u_n)$ be a sequence bounded in $\pL
p{\R^{d+m}}$, $p>1$. Let $(v_n)$ be a sequence weakly converging to
zero in ${\rm L}^2 (\R^{d})\cap {\rm L}^s (\R^{d})$ for every finite
$s\geq p'$. Then, the following representation holds for the
H-distribution $B$ corresponding to  (sub)sequences (of) $(u_n)$ and
$(v_n)$
\begin{equation*}
B= \sum\limits_{l=1}^\infty f^l(\my,\mx,\mxi)
d\nu^l(\mx,\mxi) d\my,
\end{equation*}
where $f^l(\my,\mx,\mxi) d\nu^l(\mx,\mxi) d\my $ are
 generalised H-measures corresponding to  $(u_n^l)$ and
 $(v_n)$, $l\in \N$.

\end{theorem}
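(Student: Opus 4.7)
The plan is to decompose $u_n$ into its level-set pieces $u_n^l$ and to recover $B$ as the sum of the generalised H-measures attached to the pairs $(u_n^l, v_n)$. Since $|u_n^l| \leq l+1$ pointwise and, by Chebyshev applied to $M := \sup_n \|u_n\|_{\pL p{\R^{d+m}}}$, $|\supp u_n^l| \leq M^p/l^p$, the sequence $(u_n^l)$ is uniformly bounded in every $\pL s{\R^{d+m}}$, $s\in[1,\infty]$. In particular it lies in $\Ld{\R^{d+m}} \cap \pL 2{\R^m;\pL 2{\R^d}}$, so Theorem \ref{th10} applies to $(u_n^l, v_n)$ with the inner exponent equal to $2$: one picks any $q > 2$, which is admissible because $(v_n)$ is bounded in $\pL s{\R^d}$ for every finite $s\geq p'$. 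A diagonal extraction then produces a single subsequence of $(u_n)$ along which, for every $l\in\N$, the generalised H-measure $\mu^l = f^l(\my,\mx,\mxi)\,d\nu^l(\mx,\mxi)\,d\my$ exists.

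For a test function $\Phi = \phi_1\overline{\phi_2}\otimes\overline\psi$ with $\phi_1 \in \Cc{\R^{d+m}}$, $\phi_2 \in \Cnl{\R^d}$, $\psi \in \pC d\Pd$, I would use the pointwise identity $u_n = \sum_l u_n^l$ (at most one summand is nonzero at a given point) to split the defining integral of $B$ at a cutoff level $L$:
$$\int \phi_1 u_n \overline{{\cal A}_{\psi_\Pd}(\phi_2 v_n)}\,d\mx d\my = \sum_{l\leq L}\int \phi_1 u_n^l \overline{{\cal A}_{\psi_\Pd}(\phi_2 v_n)}\,d\mx d\my + R_n^L,$$
where $R_n^L$ is the tail contribution involving $u_n - T_L u_n$. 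Each term in the finite sum converges, as $n\to\infty$, to $\langle \mu^l, \Phi\rangle$ by the definition of $\mu^l$, so the statement reduces to the uniform bound $\lim_L \sup_n |R_n^L| = 0$.

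This uniform tail estimate is the main obstacle, and it is where the strong integrability of $(v_n)$ crucially enters. Fixing any $r \in (1, p)$, the dual exponent $r' > p'$ falls inside the range where $(v_n)$ is uniformly bounded, so by Corollary \ref{m1} the sequence ${\cal A}_{\psi_\Pd}(\phi_2 v_n)$ is uniformly bounded in $\pL{r'}{\R^d}$. The layer-cake inequality
$$\|u_n - T_L u_n\|_{\pL r{\R^{d+m}}}^r = \int_{\{|u_n|>L\}}|u_n|^r \leq |\{|u_n|>L\}|^{1-r/p}\|u_n\|_{\pL p{\R^{d+m}}}^r \leq M^p L^{-(p-r)}$$
shows that the tail decays uniformly in $\pL r{\R^{d+m}}$. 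H\"older's inequality (with $\phi_1$ bounded and compactly supported) then yields $|R_n^L| \leq C L^{-(p-r)/r}$ independently of $n$, as required.

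Passing $L\to\infty$ therefore gives $\langle B, \Phi\rangle = \sum_l \langle \mu^l, \Phi\rangle$ on the chosen class of test functions. The identity extends by density to the full test-function space, using Corollary \ref{korolar} to ensure continuity of $B$ together with the analogous continuity of each $\mu^l$, yielding the claimed decomposition.
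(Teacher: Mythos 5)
Your proposal is correct and follows essentially the same route as the paper: decompose $u_n$ into the level-set pieces $u_n^l$, identify each contribution as a generalised H-measure via Theorem \ref{th10}, and justify interchanging the sum with the limit in $n$ by a tail estimate that is uniform in $n$. The only difference is cosmetic: you obtain the uniform tail bound quantitatively from Chebyshev's inequality and H\"older with an exponent $r<p$ (pairing against $v_n$ bounded in $\pL{r'}\Rd$, $r'>p'$), whereas the paper approximates $\phi_1$ by a bounded compactly supported function and uses the uniform decay of ${\rm meas}\{|u_n|>L\}$ supplied by Lemma \ref{trunc-l} --- both arguments rest on exactly the same two facts.
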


\begin{proof}
First remark that we can write
\begin{equation*}
u_n=\sum\limits_{l=1}^\infty u_n^l,
\end{equation*}
and that $(u_n^l)$ is a sequence of functions with $\LLb$ norms
bounded by $l$. Denote by $(u_n)$ a non-relabeled subsequence of
$(u_n)$ such that for each $l$ $(u_n^l)$ converges weakly $\ast$ in
$\Lb{\R^d\times \R^m}$ toward a limit $u^l$.

By means of the last relation the limit from \eqref{rev1} can be express as
\begin{equation}
\label{pom}
\lim\limits_{n\to \infty}\int_{\R^{d+m}} \phi_1(\mx,
\my)\sum\limits_{l=1}^\infty u_n^l(\mx,\my) {\cal
A}_{\psi_\Pd}( \phi_2 v_n)(\mx) d\mx d\my\,.
\end{equation}
The test function $\phi_1$ is taken from the space $\pL {{p}'}{\R^{m}; \pL{\bar{p}'}\Rd}
$ for some $\bar p \in \oi 1p$, and the integral is well defined since by assumption $(v_n)$ is specially bounded in ${\rm L}^q (\R^{d})$, with $q$ given in a).

The result of the theorem will follow easily if we show the summation sign in \eqref{pom} can be put in front of the limit.

To this effect notice that $\sum\limits_{l=L}^\infty u^l_n$ is supported within the set
$$
\Omega_n^L=\{ \mx\in\Omega:\, u_{n}(\mx) > l \}
$$
for which we have shown in Lemma \ref{trunc-l} that
\begin{equation}
\label{conv0} \lim\limits_{L\to \infty} \sup\limits_{n\in \N} {\rm
meas}(\Omega_n^L) =0.
\end{equation}

For some  $\phi_1^\eps\in\Cbc{\R^{d+m}}$ such that $\nor{\phi_1-\phi_1^\eps}{\pL{p'}{\R^m; \pL{\bar p'}\Rd}} < \eps$ we have

\begin{align*}
\Big|\lim\limits_{n\to \infty}
&\int_{\R^{d+m}}
\phi_1(\mx, \my)\sum\limits_{l=L}^\infty u_n^l(\mx,\my)
{\cal A}_{\psi_\Pd}( \phi_2 v_n)(\mx) d\mx d\my\,\Big|\\
&=\Big|\lim\limits_{n\to \infty}\int_{\Omega_n^L} \phi_1(\mx, \my)u_n(\mx,\my) {\cal A}_{\psi_\Pd}( \phi_2 v_n)(\mx) d\mx
d\my \,\Big|\nonumber\\
&\leq C
\limsup_n \|u_n\|_{{\rm L}^{{p}}(\Omega_n^L)} \|\phi_2 v_n\|_{\pL{q}{\Rd}}
\Big(\eps + \|\phi_1^\eps\|_{{\rm L}^{\bar p'}(\Omega_n^L)}\Big)
\nonumber,
\end{align*}
where we have used that $\pL {\bar p'}K \hookrightarrow \pL { p'}K$ for a compact set $K\subset\Rd$. As $\eps$ is arbitrary, by using \eqref{conv0} it follows that the   above expression goes to zero as $L$ goes to infinity.
Thus we can shift the summation sign in \eqref{pom}, and we get
\begin{align*}
\langle B, \phi_1\overline{\phi_2} \otimes\overline\psi\rangle&=\sum\limits_{l=1}^\infty \lim\limits_{n\to \infty}\int_{\R^{d+m}}
\phi_1(\mx, \my) u_n^l(\mx,\my) {\cal A}_{\psi_\Pd}(
\phi_2 v_n)(\mx) d\mx d\my\nonumber.
\end{align*}
Now it is enough to rely on \eqref{rev11} to conclude the
proof.
\end{proof}

If the sequence $(u_n)$ from Theorem \ref{prop_repr} is bounded in
$\LLd$ with respect to the velocity variable, then the corresponding
H-distribution can be represented as an infinite (weighted) sum of
the H-distributions $\mu_i$, $i\in \N$, corresponding to the
sequences $(\int_{\R^m} u_n(\cdot,\my)e_i(\my)d\my)$ and
$(v_n)$, where $\{e_i\}_{i\in \N}$ is an orthonormal basis in
$\Ld{\R^m}$. A similar representation holds for the H-measures
(see the proof of \cite[Proposition 12]{LM2}), but in that case, by using the positivity property, it can be  further simplified to the form given in  \eqref{repr_1}.

\begin{proposition}
\label{prop_reprAp} \hskip -1mm Denote by $\mu$ the generalised H-distribution
corresponding to  (sub)\-se\-quen\-ces (of) $(u_n)$, taken to be
bounded in $\pL{p}{\Rd; \pL{2}{\R^m}}\cap {\rm L}^p(\R^{d+m})$, $p\in \ozi 12$, and $(v_n)$, weakly converging to
zero in $\Ld\Rd \cap {\rm L}^{q}(\R^d)$, for some  $q\geq 2$.
Denoting by  $\mu_{i}$ H-distributions corresponding  to $\big(\int_{\R^m}
u_n(\cdot,\my)e_i(\my)d\my\big)$ and $(v_n)$, the following
representation holds
\begin{equation}
\label{repr2} \langle \mu, \phi_1  \overline{\phi_2} \otimes \overline{\psi}
\rangle=\sum\limits_{i=1}^\infty \langle \mu_{i},
 \int _{\R^m}\phi_1(\cdot,\my) e_i(\my) d\my\,
\overline{\phi_2} \otimes \overline{
\psi}
\rangle,
\end{equation}
with test functions
 $\phi_{1}\in {\rm L}^{p'}(\R^m;{\rm
L}^{\bar{p}'}(\R^d))\cap \pL{\bar{p}'}{\Rd; \pL{2}{\R^m}}$,  $\phi_2\in \Cnl\Rd$,
and $\psi \in \pC{d}\Pd$.

\end{proposition}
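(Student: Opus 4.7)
The plan is to use the orthonormal basis expansion in $\pL 2{\R^m}$ to decompose the H-distribution $\mu$ into an infinite series of scalar H-distributions $\mu_i$. For almost every $\mx \in \Rd$, both $u_n(\mx,\cdot)$ and $\phi_1(\mx,\cdot)$ lie in $\pL 2{\R^m}$ by the extra integrability hypotheses, so the Fourier coefficients
\begin{equation*}
c_n^i(\mx) = \int_{\R^m} u_n(\mx,\my)e_i(\my)\,d\my, \qquad
\phi_1^i(\mx) = \int_{\R^m} \phi_1(\mx,\my)e_i(\my)\,d\my
\end{equation*}
are well-defined; H\"older in $\my$ bounds $|c_n^i(\mx)|$ by $\|u_n(\mx,\cdot)\|_{\pL 2{\R^m}}$, whence $(c_n^i)$ is bounded in $\pL p\Rd$, and similarly $\phi_1^i\in\pL{\bar p'}\Rd$. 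In particular each $\mu_i$ is well-defined as the H-distribution associated with $(c_n^i)$ and $(v_n)$ via Theorem \ref{prop_repr} applied with trivial velocity variable.

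Parseval in $\my$ pointwise in $\mx$ gives $\int_{\R^m}\phi_1(\mx,\my)u_n(\mx,\my)\,d\my = \sum_{i=1}^\infty \phi_1^i(\mx)c_n^i(\mx)$. Writing $\phi_1 = S_N + R_N$ with $S_N(\mx,\my) = \sum_{i=1}^N\phi_1^i(\mx)e_i(\my)$, integrating against $\overline{{\cal A}_{\psi_\Pd}(\phi_2 v_n)}$ over $\Rd$ and letting $n\to\infty$, the contribution of $S_N$ becomes exactly $\sum_{i=1}^N\langle\mu_i,\phi_1^i\overline{\phi_2}\otimes\overline\psi\rangle$ by the defining relation of each scalar H-distribution $\mu_i$.

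The main obstacle is to bound the remainder $J_N^n := \int_{\R^{d+m}}R_N u_n\overline{{\cal A}_{\psi_\Pd}(\phi_2 v_n)}\,d\mx\,d\my$ uniformly in $n$. Parseval yields $\|R_N(\mx,\cdot)\|_{\pL 2{\R^m}}\le\|\phi_1(\mx,\cdot)\|_{\pL 2{\R^m}}$ with pointwise limit $0$, so dominated convergence, justified by $\phi_1\in\pL{\bar p'}{\Rd;\pL 2{\R^m}}$, gives $\|R_N\|_{\pL{\bar p'}{\Rd;\pL 2{\R^m}}}\to 0$ as $N\to\infty$. Cauchy-Schwarz in $\my$ followed by H\"older in $\mx$ with exponents $(\bar p', p, q)$, admissible since $1/\bar p'+1/p+1/q=1$, combined with the Marcinkiewicz bound \eqref{marz_bnd} on ${\cal A}_{\psi_\Pd}$, yields
\begin{equation*}
|J_N^n|\le C\|\psi\|_{\pC d\Pd}\|\phi_2\|_{\Lb\Rd}\|R_N\|_{\pL{\bar p'}{\Rd;\pL 2{\R^m}}}\|u_n\|_{\pL p{\Rd;\pL 2{\R^m}}}\|v_n\|_{\pL q\Rd},
\end{equation*}
whose right-hand side vanishes uniformly in $n$ as $N\to\infty$.

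Hence $\limsup_n|J_N^n|\to 0$ as $N\to\infty$, which licenses the interchange of $\lim_n$ and $\sum_i$ and gives
\begin{equation*}
\langle\mu,\phi_1\overline{\phi_2}\otimes\overline\psi\rangle = \sum_{i=1}^\infty\langle\mu_i,\phi_1^i\overline{\phi_2}\otimes\overline\psi\rangle,
\end{equation*}
which is exactly \eqref{repr2}. The crux is the joint hypothesis $\phi_1\in\pL{p'}{\R^m;\pL{\bar p'}\Rd}\cap\pL{\bar p'}{\Rd;\pL 2{\R^m}}$ together with $(u_n)$ bounded in $\pL p{\Rd;\pL 2{\R^m}}$: the $\pL 2{\R^m}$ factors enable pointwise Parseval and the dominated convergence step, while the triple $(\bar p', p, q)$ matches the exponent balance dictated by the original H-distribution definition.
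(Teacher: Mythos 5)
Your proof is correct and follows essentially the same route as the paper: expand $\phi_1$ in the orthonormal basis of $\pL 2{\R^m}$, identify the partial-sum contribution with $\sum_{i\le N}\langle\mu_i,\cdot\rangle$, and justify interchanging $\lim_n$ with $\sum_i$ by a tail estimate uniform in $n$. In fact your Cauchy--Schwarz-in-$\my$ plus dominated-convergence bound $\|R_N\|_{\pL{\bar p'}{\Rd;\pL 2{\R^m}}}\to 0$ makes fully explicit the step the paper only sketches as ``a procedure similar to the one applied in the preceding theorem,'' and the exponent bookkeeping $1/\bar p'+1/p+1/q=1$ is exactly right.
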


%

\begin{proof}

Rewrite an arbitrary test function $\phi_{1}$ as
\begin{equation*}
\phi_1(\mx, \my)=\sum\limits_{i=1}^\infty c_{i}(\mx)
 e_i(\my),
\end{equation*}
where $c_{i}(\mx)=\int_{\R^m}\phi_1(\mx,\my)e_i(\my)d\my$ and $\left(\sum_i |c_i(\mx)|^2\right)^{{1/ 2}}$ belongs to ${\rm L}^{\bar{p}'}(\R^d)$.

According to Theorem \ref{prop_repr}, for $\phi_1$ from above, and $\phi_2\in \Cnl\Rd$, $\psi \in \pC{d}\Pd$ we have that
\begin{align}
\label{last}
\langle \mu, \phi_1 \overline{\phi_2} \otimes \overline{ \psi}
\rangle
&=\lim\limits_{n\to \infty}\int_{\R^{d+m}}(\phi_1
u_n)(\mx,\my)
\,\overline{{\cal A}_{\psi}(\phi_2 v_n)(\mx)} d\mx d\my\nonumber\\
&=\lim\limits_{n\to \infty}\int_{\R^d}\sum\limits_{i=1}^\infty
c_{i}(\mx) \int_{\R^m} u_n(\mx,\my)e_i(\my)d\my\,
\overline{{\cal A}_{\psi}\,(\phi_2 v_n)(\mx)}
 \,
d\mx\,,
\end{align}
By  taking into account properties of the coefficients $c_i$,  a procedure   similar to the one applied in the preceding theorem enables us to estimate the limit of
$$
\int_{\R^d}
\sum\limits_{i=L}^\infty c_{i}(\mx)
\int_{\R^m} u_n(\mx,\my)e_i(\my)d\my\,
\overline{{\cal A}_{\psi}\,(\phi_2 v_n)(\mx)} \,,
$$
which goes to zero as $L$ approaches infinity, uniformly with respect to $n$.
Thus we can relocate the summation sign in \eqref{last} in order to get
\begin{align*}
\langle \mu, \phi_1 \overline{\phi_2} \otimes \overline{ \psi}
\rangle
&=\sum\limits_{i=1}^\infty  \langle \mu_{i}(\mx,\mxi), c_i
\overline{\phi_2} \otimes \overline{ \psi}\rangle
=\sum\limits_{i=1}^\infty
\langle \mu_{i}(\mx,\mxi), \int_{\R^{m}}\!\!\!\phi_1(\cdot,\my) e_i(\my)d\my\, \overline{\phi_2} \otimes \overline{ \psi}
 \rangle,
\end{align*}
which completes the proof of \eqref{repr2}.
\end{proof}

\section*{Acknowledgements}

This paper has partially been developed while Martin Lazar was a
Postdoctoral Fellow on the Basque Center for Applied Mathematics
(Bilbao, Spain) within the  NUMERIWAVES FP7-246776 project, and
while Darko Mitrovic was part time postdoc at the University of
Bergen financed by the Research Council of Norway. The work is also
supported in part by the bilateral Croatian--Montenegro project {\it
Transport in highly heterogeneous media}, as well as by the DAAD
project {\it Center of Excellence for Applications of Mathematics}.


\begin{thebibliography}{99}

\bibitem{Ago} {\sc V.\,I.\,Agoshkov}, {\em Spaces of functions with differential-difference
characteristics and smoothness of solutions of the transport
equation}, Soviet Math. Dokl. {\bf 29} (1984), 662--666.

\bibitem{AM} {\sc N.\,Antoni\'c, D.\,Mitrovi\'c}, {\em H-distributions -- an extension of the H-measures in $L^p-L^q$ setting}, Abstr. Appl. Anal.
{\bf 2011} (2011), 12 pp.

\bibitem{AL} {\sc N.\,Antoni\'c, M.\,Lazar}, {\em Parabolic H-measures}, J. Funct. Anal. {\bf  265} (2013), 1190--1239.

\bibitem{BWM} {\sc D.~Benson, S.~Wheatcraft, M.~Meerschaert}, {\em The
fractional-order governing equation of L\'evy motion},  Water
Resources Res. {\bf 36} (2000), 1413--1423.

\bibitem{BWM1} {\sc D.~Benson, R.~Schumer, S.~Wheatcraft, M.~Meerschaert},
{\em Fractional dispersion, L\'evy motion, and the MADE tracer tests},
Transport Porous Media {\bf 42} (2001), 211--240.

\bibitem{dentz} {\sc B.~Berkowitz, A.~Cortis, M.~Dentz, H.~Scher}, {\em
Modeling non-Fickian transport in geological formations as a
continuous time random walk}, Reviews of Geophysics {\bf 44} (2009),
1--49.

\bibitem{CJ} {\sc S.~Cifani, E.~R.~Jakobsen}, {\em
    Entropy solution theory for fractional degenerate convection-diffusion
    equations},     Ann. Inst. H. Poincare Anal. Non Lineaire {\bf 28} (2011), 413--441.

\bibitem{CT} {\sc R.~Cont, P.~Tankov}, {\em Financial modelling with jump processes},
Chapman \&  Hall/CRC Financial Mathematics Series,  2004.

\bibitem{13} {\sc R.\,J.\,DiPerna, P.\,L.\,Lions, Y.\,Meyer}, {\em $L^p$ regularity of velocity
averages}, Ann. Inst. H. Poincar\' e Anal. Non Lin\' eaire {\bf 8}
(1991), 271--287.

\bibitem{8} {\sc R.\,J.\,Diperna, P.\,L.\,Lions},
{\em Global Solutions of Boltzmann Equations and the Entropy
Inequality}, Arch. Rat. Mech. Anal. {\bf 114} (1991), 47--55.

\bibitem{DHM} {\sc G.~Dolzmann,~N.~Hungerbuhler,~S.~M\"uller},
{\em Nonlinear elliptic systems with measure valued right-hand
side}, Math. Zeitschrift 226, (1997) 545--574.

\bibitem{Ger}
{\sc P.\,G\' erard}, {\em Microlocal Defect Measures}, Comm. Partial
Differential Equations  {\bf 16} (1991), 1761--1794.

\bibitem{18} {\sc F.\,Golse, L.\,Saint-Raymond}, {\em Velocity
averaging in $L^1$ for the transport equation},
C. R. Acad. Sci. Paris S\'er. I Math.
{\bf 334} (2002), 557--562.

\bibitem{Gra} {\sc L.~Grafakos}, {\em Classical Fourier Analysis},
Graduate Text in Mathematics 249,  Springer Science and Business
Media, 2008

\bibitem{LM2} {\sc M.\,Lazar, D.\,Mitrovi\'c}, {\em Velocity averaging -- a general framework},
Dynamics of PDE {\bf 3} (2012), 239--260.

\bibitem{LM_crass} {\sc M.\,Lazar, D.\,Mitrovi\'c}, {\em On an extension of a bilinear functional on $L^p(\R^d)\times E$
to Bochner spaces with an application on velocity averaging},
C. R. Acad. Sci. Paris S\'er. I Math.  {\bf 351} (2013), 261--264.

\bibitem{Liocc} {\sc P.\,L.\,Lions},
{\em A concentration compactness principle in the calculus of
variations. The limit case, parts 1 and 2}, Rev. Mat. Iberoamericana
{\bf 1} (1985), No. 1, 145--201, No. 2, 45--121.

\bibitem{LPT} {\sc P.\,L.\,Lions, B.\,Perthame, E.\,Tadmor}, {\em A kinetic formulation of multidimensional scalar
conservation law and related equations}, J. Amer. Math. Soc. {\bf 7}
(1994), 169--191.

\bibitem{MI} {\sc D.~Mitrovi\'c, I.~Ivec}, {\em A generalization of $H$-measures and application on purely fractional scalar conservation laws},
Communication on Pure and Applied Analysis {\bf 10} (2011),
1617--1627.

\bibitem{pan_arma} {\sc E.~Yu.~Panov}, {\em Existence and strong pre-compactness properties for
entropy solutions of a first-order quasilinear equation with
discontinuous flux}, Archives Rat. Mech. Anal. {\bf 195} (2010)
643--673.

\bibitem{32} {\sc B.\,Perthame, P.\,Souganidis}, {\em A limiting case for velocity averaging}, Ann.
Sci. Ec. Norm. Sup. {\bf 4} (1998), 591--598.

\bibitem{Sw}
{\sc Laurent Schwartz}, {\em Th\'eorie des distributions}, Hermann,
Paris 1978.

\bibitem{36} {\sc T.\,Tao, E.\,Tadmor}, {\em Velocity Averaging, Kinetic Formulations,
and Regularizing Effects in Quasi-Linear Partial Differential
Equations}, Comm. Pure Appl. Math. {\bf 60} (2007), 1488--1521.

\bibitem{Tar} {\sc L.\,Tartar}, {\em H-measures, a new approach for studying homogenisation,
oscillation and concentration effects in PDEs}, Proc. Roy. Soc.
Edinburgh. Sect. A {\bf 115} (1990), 193--230.

\bibitem{tar_book} {\sc L.\,Tartar}, {\em The General Theory of Homogenization: A Personalized Introduction},
Springer-Verlag Berlin Heidelberg, 2009.



\end{thebibliography}
\end{document}